\newtheorem{theorem}{Theorem}
\newtheorem{corollary}{Corollary}
\newtheorem{lemma}{Lemma}
\newtheorem{definition}{Definition}
\newtheorem{proposition}{Proposition}
\theoremstyle{remark}
\newtheorem{remark}{Remark}
\newtheorem{example}{Example}
\newcommand{\real}{{\mathbb{R}}}
\DeclareMathOperator{\SO}{SO}
\DeclareMathOperator{\SU}{SU}
\DeclareMathOperator{\GL}{GL}
\DeclareMathOperator{\Isom}{Isom}
\DeclareMathOperator{\pr}{pr}
\DeclareMathOperator{\rank}{rank}
\title{Geometric conditions for the existence of an intrinsic rolling}
\author[Erlend Grong, Mauricio Godoy M.]{Mauricio Godoy Molina \\ Erlend Grong}
\address{CMAP, \'Ecole Polytechnique, CNRS, 91128 Palaiseau, France.}
\email{mauricio.godoy@cmap.polytechnique.fr}
\address{Department of Mathematics, University of Bergen, Norway.}
\email{erlend.grong@math.uib.no}
\subjclass[2000]{37J60, 53A55, 53A17}
\keywords{Rolling maps, geodesic curvatures, anti-development}
\begin{document}
\maketitle

\begin{abstract}
We give a complete answer to the question of when two curves in two different Riemannian manifolds can be seen as trajectories of rolling one manifold on the other without twisting or slipping. We show that up to technical hypotheses, a rolling along these curves exists if and only if the geodesic curvatures of each curve coincide. By using the anti-developments of the curves, which we claim can seen of as a generalization of the geodesic curvatures, we are able to extend the result to arbitrary absolutely continuous curves. For a manifold of constant sectional curvature rolling on itself, two such curves can only differ by an isometry. In the case of surfaces, we give conditions for when loops in the manifolds lift to loops in the configuration space of the rolling.
\end{abstract}

\section{Introduction}

Rolling of surfaces without slipping or twisting is one of the classical kinematic problems that in recent years has again attracted the attention of mathematicians due to its geometric and analytic richness. A very interesting historical account of problems in non-holonomic dynamics can be found in~\cite{BMZ}, in which the problem of the sphere rolling on the plane is presented as one of the earliest examples of a non-holonomic mechanical system. The interest in this particular case can be traced back as far as the late 19th century, for instance, see~\cite{Chap1,Chap2}. A detailed exposition of the non-holonomy of the rolling sphere is presented in~\cite{J}.

The definition of the so-called rolling map, which corresponds to rolling manifolds of dimension higher than two imbedded in ${\mathbb{R}}^m$ without slipping or twisting, was given for the first time in~\cite{Sharpe}. This was the starting point of~\cite{GGMS} where this extrinsic point of view was shown to be equivalent to a purely intrinsic condition and a condition depending solely on the imbeddings of the manifolds. The extrinsic point of view, which depends on the imbeddings, has been successfully applied in some particular cases, obtaining interpolation results~\cite{HS} and controllability~\cite{JZ, Zimm}.
In this work we address the problem of existence of rollings for two abstract Riemannian manifolds. We employ the coordinate-free approach introduced in~\cite{GGMS} which allows us to consider the problem with purely intrinsic methods.

From \cite{Sharpe}, it has been know that any rolling of one manifold $M$ on the other $\widehat M$ without slipping or twisting, is completely determined by the curve $x(t)$ it rolls along in $M$ up to initial configuration. A natural question to ask is if the curve $\widehat x(t)$ in $\widehat M$ produced by the rolling is completely determined by $x(t)$, how does geometry of $\widehat x(t)$ compare to that of $x(t)$? Is there any way in which we, just by studying the curves $x(t)$ and $\widehat x(t)$ can determine if there exist a rolling along this curves? We will present a complete answer to these question in this paper.

For a long time, mathematicians have had the intuition that by rolling an $n$ dimensional manifold $M$ along a given curve $y(t)$ in $\real^n$ with the Euclidean structure, one would obtain a curve in $M$ which resembles the original curve $y(t)$. This seems to be the reason why the Brownian motion on manifolds is defined by a rolling following a Brownian motion in ${\mathbb R}^n$, see~\cite[Chapter 2]{Hsu}, or why rolling is used in interpolation, see~\cite{HS}. We will formalize this by trying to answer the following question: {\em Given a curve $x(t)$ in $M$, how can you represent it by a curve $\widehat x(t)$ in another Riemannian manifold $\widehat M$, keeping as much local information as possible?} An answer to this question already exists for some particular classes of curves. If $x(t)$ is a geodesic then, at least for short time, we can define $\widehat x(t)$ by $\widehat x(t) = \exp_{\widehat x(0)} \circ q_0 \circ \exp_{x(0)}^{-1}(x(t))$, where $q_0: T_{x(0)}M \to T_{\widehat x(0)} \widehat M$ is a chosen isometry and $\exp$ denotes the Riemannian exponential map. If $x(t)$ has $n-1$ well defined geodesic curvatures, then this data is enough to construct a curve $\widehat x(t)$ in $\widehat M$ with the same curvatures. However, if $x(t)$ is a general absolutely continuous curve, which might only have derivatives defined almost everywhere, it is neither clear how to construct $\widehat x(t)$, nor what would the term ``local data'' means.

We would like to make the argument that the best way to obtain a curve $\widehat x(t)$ in $\widehat M$ that preserves ``local data'', is to roll $M$ on $\widehat M$ without twisting or slipping. We will also show that obtaining a curve in this way, can be seen as a generalization of the techniques in the previously mentioned particular cases.

\subsection{Structure of the paper and main results}

In Section~\ref{sec:prelim2dim} we give the definition of intrinsic rolling and anti-development, both of which will be used throughout this paper. In this section we also treat the special case of dimension 2. We are able to deduce two important results for surfaces: the existence of rollings in terms of the geodesic curvature and when a loop in a manifold lifts to a loop in the configuration space. To explain in more detail what we mean by the latter, we try to answer the following question: Given a loop $x(t)$ on a surface $M$, if we roll $M$ on $\real^2$ along $x(t)$, when will we, after the rolling is completed, have returned back to the initial configuration? We are able to answer this only in terms of geometric properties of $x(t)$.

In higher dimensions the situation is more subtle and requires the use of heavier differential geometric machinery. In Section~\ref{sec:rolldimn} we therefore introduce the concepts of Frenet frames and geodesic curvatures and prove the existence of rollings in arbitrary dimensions in terms of these notions. We also address the question of existence for general absolutely continuous curves, where it may happen that the concepts mentioned before are not well defined. We are able to give an answer in terms of anti-development curves, which we argue generalizes the concept of geodesic curvatures.

After describing the theoretical benefits of rolling without twisting or slipping, we end with a more practical problem. The equations governing a rolling motion, are not in general easy to solve explicitly. However, we can somewhat simplify the problem if we know that one of the curves has all geodesic curvatures well defined. We will make some brief comments on this in Section~\ref{sec:construction}.

\vspace{0.5cm}
\noindent {\bf Acknowledgments:} We thank Georgy Ivanov, Irina Markina and Martin Stolz for their willingness to discuss some of the results presented here, as well for their useful suggestions and remarks. We also thank professor Richard Montgomery for proposing the problem in Section \ref{sec:dim2} to us and for some very enlightening conversations.

\section{Preliminaries and the two dimensional case}\label{sec:prelim2dim}

\subsection{Intrinsic rolling} \label{subsec:recalling}

The aim of this section is to provide the necessary background and notations of the coordinate-free approach of rolling manifolds without slipping or twisting as presented in~\cite{GGMS}. As customary, in the rest of the article we use simply {\em rolling} to refer to rolling without slipping or twisting.

Let $M$ and $\widehat M$ be two connected, oriented Riemannian manifolds of dimension~$n$. The configuration space $Q$ for rolling is the ${\rm SO}(n)-$bundle
\begin{equation}\label{defQ}
Q = \left\{ q \in \SO(T_xM,T_{\widehat{x}} \widehat{M}) \, \colon \, x \in M, \, \widehat{x} \in \widehat{M} \right\},
\end{equation}
where $\SO(V,W)$ stands for the space of linear, positively oriented isometries of oriented inner-product spaces $V$ and $W$. As noted in~\cite{GGMS}, the bundle $Q$ can also be represented as
\[Q=(F(M)\times F({\widehat M}))/{\rm SO}\,(n),\]
where $F(M)$ denotes the oriented orthonormal frame bundle of $M$, i.e. the principal ${\rm SO}(n)-$bundle where the fiber over a point $x\in M$ consist of the positively oriented orthonormal frames in $T_{x}M$, and the quotient is with respect to the diagonal ${\rm SO}(n)-$action.

\begin{remark}
The ${\rm SO}(n)-$bundle $Q$ is a principal bundle in the general situation {\em only } when $n=2$. In other words, if $n\geq3$ it is always possible to find manifolds such that $Q$ is not principal, see~\cite[Proposition 3.4]{Chitour}.
\end{remark}

Denoting by $\pi \colon Q\to M$ the projection onto $M$ and similarly the projection $\widehat \pi$~to~$\widehat M$, we can state the definition of an intrinsic rolling.

\begin{definition} \label{intrinsdef}
An intrinsic rolling of $M$ on $\widehat{M}$ is an absolutely continuous curve $q\colon [0,\tau] \rightarrow Q$, satisfying the following conditions: if $x(t) = \pi q(t)$ and $\widehat{x}(t) = \widehat \pi q(t)$, then
\begin{itemize}
\item[(I)] no slip condition: $\dot{\widehat{x}}(t) = q(t) \dot{x}(t)$ for almost all~$t$;
\item[(II)] no twist condition: $\displaystyle{q(t) \frac{D}{dt}\, Z(t) = \frac{D}{dt}\, q(t) Z(t)}$
for any vector field $Z(t)$ on $M$ along $x(t)$ and almost every $t$.
\end{itemize}
\end{definition}

The symbol $\frac{D}{dt}$ stands for the covariant derivative of the Levi-Civita connection on the respective manifolds. The main result in~\cite{GGMS} (Theorem 2) states that given an intrinsic rolling $q(t)$, isometric imbeddings of $M$ and ${\widehat M}$ into a common Euclidean space ${\mathbb R}^{N}$, and an initial configuration of the imbedded manifolds, there is a unique rolling in the sense of Sharpe~\cite[Appendix B]{Sharpe} yielding to the same dynamics as the original intrinsic rolling $q(t)$.

In the following sections, the letter $Q$ will always denote the configuration space of the intrinsic rolling for the manifolds under consideration, and it will always be considered as the bundle of isometries~\eqref{defQ}.
Moreover, we assume that all the manifolds are connected, oriented and Riemannian. The space ${\mathbb R}^n$ will always come furnished with the standard Euclidean structure.

\subsection{Existence of rolling in two dimensions}

If we have two $n$ dimensional manifolds $M$ and $\widehat M$ and a given absolutely continuous curve $x(t)$ in $M$, there is a rolling $q(t)$ of $M$ on $\widehat M$, such that $\pi q(t) = x(t)$, for short time in general, and for all time if $\widehat M$ is complete, see \cite[Lemma 6]{Grong}. It is uniquely determined by an initial configuration $q(0): T_{x(0)} M \to T_{\widehat x_0} \widehat M$, for some $\widehat x_0 \in \widehat M$. We say that $q(t)$ is a rolling of $M$ on $\widehat M$ along $x(t)$. Given such a rolling, it is clear that $\widehat x(t) = \widehat \pi q(t)$ is completely determined by $x(t)$ and $q(0)$, but it is not immediately clear how. We therefore seek to understand what conditions a pair of curves $(x(t), \widehat x(t))$ must satisfy in order for there to exist a rolling $q(t)$ along these curves, i.e. with $\pi q(t) = x(t)$ and $\widehat \pi q(t) = \widehat x(t).$
Before trying to give sufficient conditions for the general situation, let us see the concrete case of surfaces. In what follows, by a surface we mean a 2-dimensional, connected and oriented Riemannian manifold.

It is clear from the no slip condition, that requiring $x(t)$ and $\widehat x(t)$ have the same length is a necessary condition for the existence of a rolling $q(t)$. It is easy to construct examples to see that this is not sufficient.
Let us start by letting both $x(t)$ and $\widehat x(t)$ be $C^2$ curves, both parametrized by arc-length. In this case, the problem of existence of a rolling following given trajectories has a complete solution.

\begin{definition}\label{def:geodcurv2}
Given a $C^2$ curve $x(t)$ in a surface $M$, that is parametrized by arc-length, write $\nu(t)$ for the unique vector field along $x(t)$ such that $\{\dot x(t), \nu(t) \}$ is a positively oriented orthonormal basis for every $t$. Then $k_g(t) = \left\langle \tfrac{D}{dt} \dot x(t) , \nu(t)\right\rangle$ is called the oriented geodesic curvature of $x(t)$.
\end{definition}

\begin{theorem}\label{geodcurv}
Let $M$ and $\widehat M$ be surfaces. Let $x\colon [0,\tau]\to M$ and $\widehat x\colon [0,\tau]\to\widehat M$ be two $C^2$ curves, parameterized by arc-length. Then, there is a rolling $q(t)$ such that
$$\pi q(t) = x(t), \qquad \widehat \pi q(t) = \widehat x(t)$$
if and only if the oriented geodesic curvatures of $x(t)$ and $\widehat x(t)$ coincide.
\end{theorem}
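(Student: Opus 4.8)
The plan is to treat the two implications separately: the ``only if'' direction is a short computation, while the ``if'' direction requires explicitly building the rolling $q(t)$ and then verifying conditions (I) and (II) of Definition~\ref{intrinsdef}.

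For the ``only if'' direction, I would start from a rolling $q(t)$ along $x(t)$ and $\widehat x(t)$ and first identify how $q(t)$ acts on the distinguished frames. Since $q(t)$ is a positively oriented linear isometry and, by the no slip condition, $q(t)\dot x(t) = \dot{\widehat x}(t)$, the pair $\{\dot{\widehat x}(t), q(t)\nu(t)\}$ is a positively oriented orthonormal frame along $\widehat x(t)$; by the uniqueness built into Definition~\ref{def:geodcurv2} this forces $q(t)\nu(t) = \widehat\nu(t)$. Applying the no twist condition to the vector field $Z(t) = \dot x(t)$ gives $q(t)\frac{D}{dt}\dot x(t) = \frac{D}{dt}\dot{\widehat x}(t)$, and since $q(t)$ preserves inner products one reads off $\widehat k_g(t) = \langle \frac{D}{dt}\dot{\widehat x}(t), \widehat\nu(t)\rangle = \langle q(t)\frac{D}{dt}\dot x(t), q(t)\nu(t)\rangle = \langle \frac{D}{dt}\dot x(t), \nu(t)\rangle = k_g(t)$.

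For the converse, assuming $k_g \equiv \widehat k_g$, I would define $q(t)\colon T_{x(t)}M \to T_{\widehat x(t)}\widehat M$ to be the unique linear map with $q(t)\dot x(t) = \dot{\widehat x}(t)$ and $q(t)\nu(t) = \widehat\nu(t)$. As it carries one positively oriented orthonormal basis to another, $q(t)$ lies in $Q$; since $x$ and $\widehat x$ are $C^2$, the frames, and hence $t \mapsto q(t)$, are $C^1$, so $q(t)$ is an absolutely continuous curve in $Q$ (one checks this against the bundle structure of $Q$, e.g. in a local trivialization or via the presentation $Q = (F(M) \times F(\widehat M))/\SO(n)$). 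Condition (I) holds by construction. For condition (II), write an arbitrary vector field along $x(t)$ as $Z(t) = a(t)\dot x(t) + b(t)\nu(t)$; expanding both $q(t)\frac{D}{dt}Z(t)$ and $\frac{D}{dt}q(t)Z(t)$ by the Leibniz rule, the terms carrying $\dot a$ and $\dot b$ match automatically and one is left needing only $q(t)\frac{D}{dt}\dot x(t) = \frac{D}{dt}\dot{\widehat x}(t)$ and $q(t)\frac{D}{dt}\nu(t) = \frac{D}{dt}\widehat\nu(t)$. Because the curves are arc-length parametrized, differentiating the relations $|\dot x|^2 = |\nu|^2 = 1$ and $\langle \dot x, \nu\rangle = 0$ yields the Frenet-type equations $\frac{D}{dt}\dot x = k_g \nu$, $\frac{D}{dt}\nu = -k_g \dot x$ and likewise $\frac{D}{dt}\dot{\widehat x} = \widehat k_g \widehat\nu$, $\frac{D}{dt}\widehat\nu = -\widehat k_g \dot{\widehat x}$; substituting these and using the definition of $q(t)$, the two required identities reduce to $k_g \widehat\nu = \widehat k_g \widehat\nu$ and $-k_g \dot{\widehat x} = -\widehat k_g \dot{\widehat x}$, which hold precisely because $k_g \equiv \widehat k_g$.

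I do not expect a serious obstacle here: the heart of the matter is simply that in dimension two the oriented geodesic curvature is the only scalar invariant of an arc-length curve, and that the no twist condition is exactly the statement that $q(t)$ intertwines the covariant derivatives of the two Frenet frames. The two points needing a little care are the reduction of condition (II) from an arbitrary $Z$ to the moving frame $\{\dot x, \nu\}$ (the Leibniz-rule computation above must be carried out so that one really sees the $\dot a, \dot b$ terms cancel) and the verification that the pointwise-defined $q(t)$ is genuinely an absolutely continuous curve in the manifold $Q$ rather than merely a pointwise family of isometries.
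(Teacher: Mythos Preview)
Your argument is correct and follows essentially the same route as the paper: in both cases the key observation is that the only positively oriented isometry compatible with the no slip condition is the one sending $\{\dot x,\nu\}$ to $\{\dot{\widehat x},\widehat\nu\}$, and the no twist condition then reduces to comparing the Frenet equations $\frac{D}{dt}\dot x = k_g\nu$, $\frac{D}{dt}\nu = -k_g\dot x$ with their hatted counterparts. The paper packages the two implications into a single ``if and only if'' by first pinning down this unique candidate $q(t)$ and then reading off directly that no twist holds exactly when $k_g=\widehat k_g$; you instead treat the directions separately and add explicit remarks on the absolute continuity of $t\mapsto q(t)$ and on the Leibniz reduction from an arbitrary $Z$ to the frame, but these are cosmetic differences rather than a genuinely different strategy.
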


\begin{proof}
Choose vector fields $\nu(t)$ along $x(t)$ and $\widehat \nu(t)$ along $\widehat x(t)$ as in Definition~\ref{def:geodcurv2} and denote by $k_g$ and $\widehat k_g$ the respective oriented geodesic curvatures. Note that, by definition, $\nu(t)$ and $\widehat \nu(t)$ are differentiable at least once. It is clear that the unique positively oriented isometry $q(t): T_{x(t)}M \to T_{\widehat x(t)} \widehat M$ which satisfy the no slip condition is the isometry determined by
\begin{eqnarray*}
q(t) \dot x(t) &=& \dot{\widehat x}(t) ,\\
q(t) \nu(t) &=& \widehat \nu(t),
\end{eqnarray*}
for all $t$. We need to study when does $q(t)$ also satisfy the no twist condition. In order to do that, it is enough to show that it holds for $\dot x(t)$ and $\nu(t)$.
From the equations
\begin{eqnarray*}
q(t) \frac{D}{dt}\, \dot x(t) = k_g(t) \widehat \nu(t), && \frac{D}{dt}\, q(t) \dot x(t) =  \widehat k_g(t) \widehat \nu(t),\\
q(t) \frac{D}{dt}\, \nu(t) = -k_g(t) \dot{\widehat x}(t), && \frac{D}{dt}\, q(t) \nu(t) = -\widehat k_g(t) \dot{\widehat x}(t),
\end{eqnarray*}
it follows that the no twist condition holds if and only if $k_g(t) = \widehat k_g(t)$ for any~$t$.
\end{proof}
In order to interpret this theorem in more detailed a manner, we will introduce the notions of development and anti-development. We will discuss this in general for $n$-dimensional manifolds.

\subsection{Anti-development}\label{subsec:dev}

A general frame at $x\in M$ is a fixed linear isomorphism $f\colon \real^n\to T_xM$. Each general frame $f$ gives a basis $f_1, \dots, f_n$ of $T_xM$, defined by $f_j:=f(e_j)$, where ${\{e_j\}}^n_{j=1}$ is the canonical ordered basis of ${\mathbb R}^n$. Denote the set of all general frames at $x$ by ${\mathcal F}_x(M)$. The general frame bundle ${\mathcal F}(M)=\coprod_x{\mathcal F}_x(M)$ can naturally be given the structure of a manifold of dimension $n(n+1)$ with a principal $\GL(n, {\mathbb R})-$structure. The manifold structure of ${\mathcal F}(M)$ is such that the natural projection $\pr_{M}\colon {\mathcal F}(M)\to M$ is a smooth map.

Let $M$ be equipped with an affine connection $\nabla$. A curve $f\colon [0,\tau]\to{\mathcal F}(M)$ is called horizontal if the vector fields $f_j(t)$ are parallel along $\pr_{M} \circ f\colon [0,\tau]\to M$ with respect to $\nabla$. The set of tangent vectors of all horizontal curves forms an $n$-dimensional distribution $E$ over ${\mathcal F}(M)$ called the Ehresmann connection associated to $\nabla.$ For any point $f \in \mathcal F(M)$, a vector $v \in E_f$ is called the horizontal lift of $X \in T_{\pr_M(f)}M$ at $f$, if $(\pr_{M})_* v = X$. Since $(\pr_{M})_*|_{E_f}$ is a vector space isomorphism, the mapping $X\mapsto v$ is well defined. Write $H_X(f)$ to denote the horizontal lift of $X$ at $f$. If $x\colon [0,\tau] \to M$ is any absolutely continuous curves in $M$, and $f\colon[0,\tau]\to{\mathcal F}(M)$ is defined so that each $f_j(t)$ is a parallel vector field along $x(t)$, then
\begin{equation} \label{liftder} H_{\dot{x}(t)}(f(t)) = \dot{f}(t).\end{equation}
The horizontal curve $f(t)$ solving~\eqref{liftder} is completely determined up to initial configuration $f(0)\in{\mathcal F}_{x(0)}(M)$.

\begin{definition}
A curve $y\colon [0,\tau] \to \real^n, y(0)= 0,$ is called the anti-development of $x\colon [0,\tau]\to M$, if there is a horizontal curve $f\colon [0,\tau]\to{\mathcal F}(M)$, so that $\pr_{M} f(t) = x(t),$ and
\begin{equation} \label{antidev}
f(t)(\dot{y}(t)) = \dot{x}(t).\end{equation}
\end{definition}
Note that~\eqref{liftder} permits to rewrite equation~\eqref{antidev} as $\dot{f}(t) = H_{f(t)(\dot{y}(t))}(f(t)),$ which corresponds to the definition of anti-development often found in the literature.

 If $\nabla$ is compatible with the metric then, according to the definition of $F(M)$ from Subsection~\ref{subsec:recalling}, we can consider the corresponding Ehresmann connection $E$ as a subbundle of $TF(M)$, since orthonormal frames remain orthonormal under parallel transport. 
 
The idea of development --a sort of ``reverse'' of the anti-development-- plays a fundamental role when defining Brownian motion on Riemannian manifolds. In~\cite[Chapter 2]{Hsu} it is possible to find the comment that the development corresponds to a rolling with no slipping of $M$ on ${\mathbb R}^n$, but no further interpretation is given.

\begin{remark} \label{remarkantidev}
For a horizontal curve $f\colon [0,\tau]\to{\mathcal F}(M)$ such that $\pr_M f(t) = x(t)$, the corresponding anti-development is given explicitly by $y(t) = \int_0^t f^{-1}(s)(\dot{x}(s)) ds.$
\end{remark}

For the rest of this paper, whenever we refer to a horizontal curve $f(t)$ in $F(M)$, we mean that $f(t)$ is horizontal with respect to the Ehresmann connection associated to the Levi-Civita connection on $M$. Consequently, any anti-development curve is defined with respect to such horizontal curves.

\subsection{Anti-development and intrinsic rolling} \label{devandroll}

The following result connecting horizontal curves in the frame bundles with rollings can be found in \cite[Corollary~1]{Grong}.

\begin{lemma} \label{lemmaantidevroll}
For any rolling $q(t)$ of $M$ on $\widehat M$, there are horizontal curves $f\colon [0,\tau]\to F(M)$ and $\widehat f\colon [0,\tau]\to F(\widehat M)$, so that $\pr_{M} f(t) = \pi q(t)$, $\pr_{\widehat M} \widehat f(t) = \widehat \pi q(t)$ and $q(t) = \widehat f(t) \circ f^{-1}(t).$
\end{lemma}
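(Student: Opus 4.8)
The plan is to build the two horizontal curves directly out of $q(t)$, using only the no twist condition. Set $x(t) = \pi q(t)$ and $\widehat x(t) = \widehat \pi q(t)$, and fix an arbitrary positively oriented orthonormal frame $f(0) \in F_{x(0)}(M)$. Let $f \colon [0,\tau] \to F(M)$ be the horizontal curve over $x(t)$ with this initial value, i.e.\ the curve whose components $f_j(t)$ are the parallel vector fields along $x(t)$ with $f_j(0)$ prescribed; this exists and is absolutely continuous since parallel transport along an absolutely continuous curve is well posed. Then I would simply define $\widehat f(t) := q(t) \circ f(t) \colon \real^n \to T_{\widehat x(t)}\widehat M$.

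First I would check that $\widehat f$ is a legitimate curve in $F(\widehat M)$ lying over $\widehat x(t)$. Since each $q(t)$ is a positively oriented linear isometry $T_{x(t)}M \to T_{\widehat x(t)}\widehat M$ and $f(t)$ is a positively oriented orthonormal frame of $T_{x(t)}M$, the composition $\widehat f(t)$ is a positively oriented orthonormal frame of $T_{\widehat x(t)}\widehat M$, so $\widehat f(t) \in F_{\widehat x(t)}(\widehat M)$ and $\pr_{\widehat M}\widehat f(t) = \widehat x(t)$; likewise $\pr_M f(t) = x(t)$ by construction of $f$. Absolute continuity of $\widehat f$ follows from that of $q$ and of $f$ together with the smoothness of composition, checked in local trivializations of $F(M)$ and $F(\widehat M)$.

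The only real content is showing that $\widehat f$ is horizontal, i.e.\ that each component $\widehat f_j(t) = q(t) f_j(t)$ is parallel along $\widehat x(t)$. This is exactly where the no twist condition (II) enters: taking $Z(t) = f_j(t)$, which is parallel along $x(t)$ so that $\tfrac{D}{dt} Z(t) = 0$, condition (II) gives $\tfrac{D}{dt}\, \widehat f_j(t) = \tfrac{D}{dt}\, q(t) f_j(t) = q(t)\tfrac{D}{dt}\, f_j(t) = 0$ for almost every $t$. An absolutely continuous vector field with almost everywhere vanishing covariant derivative is parallel, so $\widehat f$ is horizontal. Finally $q(t) = \widehat f(t)\circ f^{-1}(t)$ holds by the very definition of $\widehat f$. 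It is worth remarking that the no slip condition (I) is not used at all; it is the twist-freeness of the rolling that makes the transported frame $q(t)f(t)$ parallel.

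I do not expect a serious obstacle here, since the statement is essentially a reformulation of condition (II) in terms of frame bundles. The only point requiring a little care is regularity: that the composition of the absolutely continuous curves $q(t)$ and $f(t)$ is again absolutely continuous as a curve in $F(\widehat M)$, and that an absolutely continuous orthonormal frame with almost-everywhere zero covariant derivative is genuinely parallel. Both are standard facts about absolutely continuous curves and linear ODEs, and neither obstructs the argument.
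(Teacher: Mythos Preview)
Your argument is correct and is the natural way to establish the lemma: parallel-transport an initial orthonormal frame along $x(t)$ to get $f(t)$, set $\widehat f(t)=q(t)\circ f(t)$, and invoke the no twist condition to see that each $\widehat f_j$ is parallel along $\widehat x(t)$. The paper does not actually give its own proof of this statement but simply cites \cite[Corollary~1]{Grong}; your construction is precisely what one expects that reference to contain, and your remark that condition~(I) plays no role is accurate.
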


In particular, any rolling $q(t)$ of $\real^n$ on $M$ can be considered as a horizontal curve in $F(M)$. If $y(t)$ is the projection of $q(t)$ to $\real^n$ and $x(t)$ is the corresponding curve in $M$, then, up to translation, $y(t)$ is an anti-development curve of $x(t)$. In this context, equation~\eqref{antidev} becomes a restatement of the no slip condition, while the requirement of $f(t)$ being horizontal is equivalent to the no twist condition. The latter follows since horizontality is equivalent to require that $f(t)$ sends parallel vector fields along $y(t)$ (i.e. constant vector fields) to parallel vector fields along~$x(t)$.

From the definition of rolling, the following transitivity holds (cf. \cite[Appendix~B, Theorem 4.1]{Sharpe}): if $q(t)$ is a rolling along the curves $x(t)$ in $M$ and $\widehat x(t)$ in $\widehat M$, and if $\widetilde q(t)$ is a rolling along the curves $\widehat x(t)$ in $\widehat M$ and $\widetilde x(t)$ in $\widetilde M$, then $\widetilde q(t) \circ q(t)$ is a rolling along $x(t)$ in $M$ and $\widetilde x(t)$ in $\widetilde M$. This transitivity, together with Lemma~\ref{lemmaantidevroll}, imply the following result.

\begin{proposition} \label{PropAntidev}
Consider two curves $x\colon [0,\tau]\to M$ and $\widehat x\colon [0,\tau]\to \widehat M$, and let $y(t)$ and $\widehat y(t)$ be anti-development curves for $x(t)$ and $\widehat x(t)$ respectively. Then there exist a rolling of $M$ on $\widehat M$ along $x(t)$ and $\widehat x(t)$ if and only if there is a rolling of $\real^n$ on itself along $y(t)$ and $\widehat y(t)$.
\end{proposition}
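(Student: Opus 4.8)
The plan is to prove the two implications separately, using Lemma~\ref{lemmaantidevroll} and the transitivity of rolling that was recalled just above the statement. The key observation is that, by the discussion following Lemma~\ref{lemmaantidevroll}, a horizontal curve $f\colon[0,\tau]\to F(M)$ with $\pr_M f(t)=x(t)$ is \emph{the same thing} as a rolling $q_M(t)=f(t)\colon\real^n\to M$ of $\real^n$ on $M$ along the anti-development curve $y(t)=\int_0^t f^{-1}(s)(\dot x(s))\,ds$, and likewise $\widehat f(t)$ is a rolling $q_{\widehat M}(t)=\widehat f(t)$ of $\real^n$ on $\widehat M$ along $\widehat y(t)$. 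So the whole argument is about composing and inverting rollings.

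For the forward direction, suppose $q(t)$ is a rolling of $M$ on $\widehat M$ along $x(t)$ and $\widehat x(t)$. By Lemma~\ref{lemmaantidevroll} there are horizontal curves $f(t)$ in $F(M)$ and $\widehat f(t)$ in $F(\widehat M)$ with $q(t)=\widehat f(t)\circ f^{-1}(t)$, $\pr_M f(t)=x(t)$, $\pr_{\widehat M}\widehat f(t)=\widehat x(t)$. Reading $f(t)$ as a rolling of $\real^n$ on $M$ along its anti-development $y_f(t)$, and $\widehat f(t)$ as a rolling of $\real^n$ on $\widehat M$ along its anti-development $\widehat y_{\widehat f}(t)$, I would invert the first one: $f^{-1}(t)$ is a rolling of $M$ on $\real^n$ along $x(t)$ and $y_f(t)$ (the inverse of a rolling is a rolling along the same curves with roles swapped, again by Sharpe's Appendix~B). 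Then transitivity applied to $f^{-1}(t)$ followed by $\widehat f(t)\circ f^{-1}(t)$... — more cleanly: transitivity applied to the rolling $f(t)^{-1}$ of $M$ on $\real^n$ and the rolling $\widehat f(t)$ of $\real^n$ on $\widehat M$ shows $\widehat f(t)\circ f(t)^{-1}=q(t)$ is consistent; the point I actually want is that $q(t)\circ f(t)=\widehat f(t)$, so composing the rolling $f(t)$ (of $\real^n$ on $M$, along $y_f(t)$ and $x(t)$) with the rolling $q(t)$ (of $M$ on $\widehat M$, along $x(t)$ and $\widehat x(t)$) gives, by transitivity, a rolling $q(t)\circ f(t)=\widehat f(t)$ of $\real^n$ on $\widehat M$ along $y_f(t)$ and $\widehat x(t)$. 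Hence $\widehat x(t)$ has $y_f(t)$ as an anti-development curve. Now the remaining issue is that the hypothesis only fixes \emph{some} anti-developments $y(t)$ of $x(t)$ and $\widehat y(t)$ of $\widehat x(t)$, which need not equal $y_f(t)$ and $\widehat y_{\widehat f}(t)$; but any two anti-developments of the same curve differ by the action of a fixed element of $\SO(n)$ on $\real^n$ (different choices of $f(0)$), and such a rigid rotation is itself a (trivial) rolling of $\real^n$ on $\real^n$. So one more application of transitivity — pre- and post-composing with these constant rotations — converts the rolling of $\real^n$ on $\widehat M$ along $(y_f,\widehat x)$ into a rolling of $\real^n$ on itself along $(y,\widehat y)$, as required.

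The converse is the same argument run backwards, and is if anything easier: given a rolling $p(t)$ of $\real^n$ on itself along $y(t)$ and $\widehat y(t)$, pick horizontal curves $f(t)$ in $F(M)$ over $x(t)$ and $\widehat f(t)$ in $F(\widehat M)$ over $\widehat x(t)$ realizing $y(t)$ and $\widehat y(t)$ as anti-developments (adjusting initial frames so the anti-developments are exactly $y(t)$, $\widehat y(t)$). Viewing $f(t)$ as a rolling of $\real^n$ on $M$ and $\widehat f(t)$ as a rolling of $\real^n$ on $\widehat M$, invert the former and chain $f(t)^{-1}$, $p(t)$, $\widehat f(t)$ via transitivity: the composite $\widehat f(t)\circ p(t)\circ f(t)^{-1}$ is a rolling of $M$ on $\widehat M$, and by construction it projects to $x(t)$ and $\widehat x(t)$.

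The main obstacle is bookkeeping rather than anything deep: one must be careful that (i) the inverse of a rolling is a rolling with the two base curves interchanged, (ii) composition of rollings along matching intermediate curves is a rolling (this is exactly the transitivity cited from Sharpe and is safe to invoke), and (iii) the non-canonical choice of anti-development — i.e. the freedom in the initial frame $f(0)$, which amounts to a constant $\SO(n)$-rotation of $\real^n$ — is absorbed by composing with a constant rolling of $\real^n$ on itself. Keeping track of which curve lives in which manifold at each composition, and verifying that the base-point projections come out as claimed, is where the care is needed; the geometry has already been packaged into Lemma~\ref{lemmaantidevroll} and the transitivity statement.
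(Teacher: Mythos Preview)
Your proposal is correct and follows essentially the same approach as the paper: the paper does not give a detailed argument at all, merely stating that the result follows from the transitivity of rolling together with Lemma~\ref{lemmaantidevroll}, and your write-up is a careful unpacking of exactly that. Your additional observation that the ambiguity in the choice of anti-development is absorbed by a constant $\SO(n)$ rotation (itself a rolling of $\real^n$ on itself) is correct and makes explicit a point the paper leaves implicit.
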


This result will be used later in Section \ref{sec:AbsConCurves} in order to reduce the question of existence of rollings between manifolds to rollings along curves in $\real^n$.

\subsection{Rolling along a loop}\label{sec:dim2}

Using the connection between rollings and the horizontal curves in the frame bundles, we state some corollaries of Theorem~\ref{geodcurv}. In particular, we want use the previous mentioned theorem to solve the following problem. Consider a rolling $q(t)$ of $M$ on $\real^2$ along a given curve $x(t)$ in $M$. Assume that $x(t)$ is a continuous loop, i.e. $x(0) = x(\tau)$. When will $q(t)$ be a continuous loop also?

On the way to solving this, let us first look at a rolling $q(t)$ of surfaces $M$ and $\widehat M$ such that $x(t) = \pi q(t)$ and $\widehat x(t) = \widehat \pi q(t)$ are loops. We want to argue that we can determine whether or not $q(t)$ is a loop itself from the projected curves, at least if they are sufficiently regular.

\begin{corollary} \label{corloop1}
 Let $M$ and $\widehat M$ be surfaces. Let $q\colon [0,\tau]\to Q$ be a rolling such that $x(t) = \pi q(t)$ and $\widehat x(t) = \widehat \pi  q(t)$ are $C^2$ curves parametrized by arc-length. Then $q(t)$ is a continuous loop in $Q$ if and only if  both $x(t)$ and $\widehat x(t)$ are continuous loops where the oriented angles $\angle(\dot x(0),\dot x(\tau))$ and $\angle(\dot {\widehat x}(0),\dot {\widehat x}(\tau))$ coincide.
\end{corollary}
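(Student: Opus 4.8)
The plan is to use the explicit description of the rolling furnished by the proof of Theorem~\ref{geodcurv}. Since $q(t)$ is, by definition of a rolling, absolutely continuous, it is in particular continuous, so calling $q$ a loop simply means $q(0)=q(\tau)$. I would first observe that if $q(0)=q(\tau)$ then applying $\pi$ and $\widehat\pi$ forces $x(0)=x(\tau)$ and $\widehat x(0)=\widehat x(\tau)$; hence in all that follows one may assume $x$ and $\widehat x$ are loops and regard $q(0)$ and $q(\tau)$ as two positively oriented isometries $T_{x(0)}M\to T_{\widehat x(0)}\widehat M$. Moreover, by the argument in the proof of Theorem~\ref{geodcurv}, any rolling along $x(t)$ and $\widehat x(t)$ is the unique positively oriented isometry $q(t)\colon T_{x(t)}M\to T_{\widehat x(t)}\widehat M$ with $q(t)\dot x(t)=\dot{\widehat x}(t)$ and $q(t)\nu(t)=\widehat\nu(t)$, where $\nu$, $\widehat\nu$ are the oriented unit normals of Definition~\ref{def:geodcurv2}.

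For the implication ``$q$ a loop $\Rightarrow$ the angle condition'', I would note that the no slip condition $\dot{\widehat x}(t)=q(t)\dot x(t)$, which a priori holds only almost everywhere, in fact holds for every $t$ by continuity of all the data. Evaluating at $t=0$ and $t=\tau$ and using $q(0)=q(\tau)$ gives $\dot{\widehat x}(0)=q(0)\dot x(0)$ and $\dot{\widehat x}(\tau)=q(0)\dot x(\tau)$. A positively oriented linear isometry preserves oriented angles, so $\angle(\dot{\widehat x}(0),\dot{\widehat x}(\tau))=\angle(q(0)\dot x(0),q(0)\dot x(\tau))=\angle(\dot x(0),\dot x(\tau))$, as required.

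For the converse, assume $x$ and $\widehat x$ are loops and set $\theta:=\angle(\dot x(0),\dot x(\tau))$ and $\widehat\theta:=\angle(\dot{\widehat x}(0),\dot{\widehat x}(\tau))$ in $\real/2\pi\ent$, with $\theta=\widehat\theta$. Let $R\in\SO(T_{x(0)}M)$ be the rotation by $\theta$; since $\{\dot x(0),\nu(0)\}$ and $\{\dot x(\tau),\nu(\tau)\}$ are both positively oriented orthonormal frames and $R\dot x(0)=\dot x(\tau)$, one also has $R\nu(0)=\nu(\tau)$. Let $\widehat R\in\SO(T_{\widehat x(0)}\widehat M)$ be the rotation by $\widehat\theta$, which likewise carries $\dot{\widehat x}(0)$ to $\dot{\widehat x}(\tau)$ and $\widehat\nu(0)$ to $\widehat\nu(\tau)$. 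Substituting $q(\tau)\dot x(\tau)=\dot{\widehat x}(\tau)$, $q(\tau)\nu(\tau)=\widehat\nu(\tau)$ and comparing on the basis $\{\dot x(0),\nu(0)\}$, I obtain $q(\tau)\circ R=\widehat R\circ q(0)$. Since $q(0)$ conjugates the source rotation $R^{-1}$ to the target rotation by $-\theta$, this becomes $q(\tau)=\widehat R\circ q(0)\circ R^{-1}=(\text{rotation of }T_{\widehat x(0)}\widehat M\text{ by }\widehat\theta-\theta)\circ q(0)$; as $\widehat\theta=\theta$ this yields $q(\tau)=q(0)$, so $q$ is a loop.

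The argument is largely bookkeeping, and the only place I expect to need genuine care is keeping the orientations straight: that a positively oriented isometry preserves oriented angles, that the rotation taking $\dot x(0)$ to $\dot x(\tau)$ automatically takes $\nu(0)$ to $\nu(\tau)$, and that conjugation by $q(0)$ converts a rotation of $T_{x(0)}M$ into the rotation of $T_{\widehat x(0)}\widehat M$ through the same angle --- all of which must be combined with the reminder that oriented angles are defined only modulo $2\pi$, so that ``coincide'' is exactly the sharp hypothesis.
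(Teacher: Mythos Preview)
Your proof is correct. Both directions are handled cleanly: the forward direction is immediate from $q(0)=q(\tau)$ being an orientation-preserving isometry, and in the converse the computation $q(\tau)=\widehat R\circ q(0)\circ R^{-1}$ together with the fact that $\SO(2)$ is abelian (so conjugation by $q(0)$ preserves the rotation angle) gives exactly $q(\tau)=q(0)$ when $\theta=\widehat\theta$.

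Your approach differs from the paper's. You work directly with the moving Frenet frames $\{\dot x,\nu\}$ and $\{\dot{\widehat x},\widehat\nu\}$, exploiting the description of $q(t)$ from the proof of Theorem~\ref{geodcurv}. The paper instead introduces \emph{parallel} orthonormal frames $f(t)$ along $x$ and $\widehat f(t)=q(t)f(t)$ along $\widehat x$, writes $\dot x$ in this frame via an angle $v(t)$ with $\dot v=k_g$, and decomposes each oriented angle as a sum $\alpha+\theta$, where $\alpha=\int_0^\tau k_g$ is the integrated curvature (the same for both curves) and $\theta$, $\widehat\theta$ are the holonomy angles of the loops; then $q(0)=q(\tau)$ reduces to $\theta=\widehat\theta$. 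Your route is shorter and more elementary, needing nothing beyond the $\SO(2)$ structure. The paper's route is slightly longer but yields the holonomy interpretation that is used immediately afterward (the reformulation ``$q$ is a loop iff $x$ and $\widehat x$ have the same holonomy'') and feeds directly into Corollary~\ref{intkappa}.
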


\begin{proof}
Consider the orthonormal positively oriented frame $f(t)=(f_1(t), f_2(t))$, parallel along $x(t)$ with $\dot x(0) = f_1(0)$. Let $\widehat f(t)=(\widehat f_1(t),\widehat f_2(t))$, where $\widehat f_j(t) = q(t) f_j(t), j=1,2$. Consider a $C^1$ curve $v(t)$ in $\real$, so that
\begin{equation} \label{parallelparm}
\begin{array}{rcl}
\dot x(t) & = & \cos(v(t)) f_1(t) + \sin(v(t)) f_2(t), \\
\dot {\widehat x}(t) & = & \cos(v(t))\widehat f_1(t) + \sin(v(t))\widehat f_2(t). 
\end{array}
\end{equation}
It is easy to see that the oriented curvature of $x(t)$ satisfies $k_g(t) = \dot v(t).$ Define $\int_0^\tau k_g(t)\,dt = v(\tau) - v(0) =: \alpha$.

Let $\theta$ be the angle in which $f(\tau)$ is rotated with respect to $f(0)$. Define the angle $\widehat \theta$ similarly. By definition, $q(0) = q(\tau)$ if and only if $\theta = \widehat \theta$ holds. Since $\angle(\dot x(0),\dot x(\tau))=\alpha + \theta$ and $\angle(\dot {\widehat x}(0),\dot {\widehat x}(\tau))=\alpha + \widehat \theta,$ the result follows.
\end{proof}

\begin{center}
\begin{figure}[h]
\scalebox{1} 
{
\begin{pspicture}(0,-1.95)(8.32,1.97)
\pscircle[linewidth=0.04,dimen=outer](4.7,0.47){1.2}
\psbezier[linewidth=0.02](3.5,0.47)(3.5,-0.23)(5.9,-0.23)(5.9,0.47)
\psbezier[linewidth=0.02,linestyle=dashed,dash=0.16cm 0.16cm](3.5,0.47)(3.5,1.17)(5.9,1.17)(5.9,0.47)
\psdots[dotsize=0.12](4.7,-0.73)
\psline[linewidth=0.04cm](2.4,0.17)(0.9,-1.93)
\psline[linewidth=0.04cm](0.9,-1.93)(6.8,-1.93)
\psline[linewidth=0.04cm](8.3,0.17)(6.8,-1.93)
\psline[linewidth=0.04cm](2.4,0.17)(3.6,0.17)
\psline[linewidth=0.04cm](5.8,0.17)(8.3,0.17)
\psline[linewidth=0.04cm,linestyle=dotted,dotsep=0.16cm](3.6,0.17)(5.8,0.17)
\usefont{T1}{ptm}{m}{n}
\rput(2.08,-1.625){$\mathbb{R}^2\cong T_pS$}
\usefont{T1}{ptm}{m}{n}
\rput(4.62,-1.125){$\alpha$}
\usefont{T1}{ptm}{m}{n}
\rput(5.38,1.775){$S$}
\psline[linewidth=0.04cm,arrowsize=0.05291667cm 2.0,arrowlength=1.4,arrowinset=0.4]{->}(4.7,-0.73)(5.4,-1.13)
\psline[linewidth=0.04cm,arrowsize=0.05291667cm 2.0,arrowlength=1.4,arrowinset=0.4]{->}(4.7,-0.73)(3.8,-0.93)
\psbezier[linewidth=0.04](4.38,-0.81)(4.6,-0.93)(4.7,-0.93)(4.9,-0.83)
\usefont{T1}{ptm}{m}{n}
\rput(4.74,0.575){$x(t)$}
\usefont{T1}{ptm}{m}{n}
\rput(3.61,-0.725){$\dot x(0)$}
\usefont{T1}{ptm}{m}{n}
\rput(5.49,-0.825){$\dot x(\tau)$}
\psbezier[linewidth=0.04](4.7,-0.748404)(5.2,-0.63)(5.1,-0.447208)(5.4,0.07)(5.7,0.587208)(5.5903344,1.0087012)(4.6,0.87)(3.6096656,0.7312988)(4.4,0.66594887)(4.3,0.37)(4.2,0.07405113)(3.5662777,0.74515265)(3.9,0.07)(4.233722,-0.60515267)(4.4,-0.63)(4.7,-0.748404)
\usefont{T1}{ptm}{m}{n}
\rput(4.67,-0.525){$p$}
\psline[linewidth=0.02cm](5.56,0.53)(5.44,0.37)
\psline[linewidth=0.02cm](5.56,0.53)(5.594142,0.33100504)
\end{pspicture} 
}
\caption{A sphere $S$ rolling following a loop $\widehat x(t)$ in ${\mathbb R}^{2}$.}\label{fig:loop}
\end{figure}
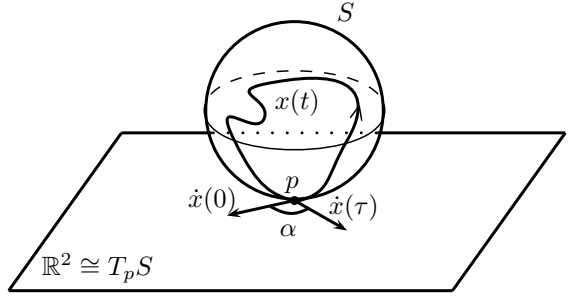
\end{center}

The angles $\theta$ and $\widehat \theta$ in the proof of Corollary~\ref{corloop1}, can be understood as elements in the respective holonomy groups corresponding to $x(t)$ and $\widehat x(t)$. Hence, another way to formulate Corollary~\ref{corloop1}, is to say that $q(0) = q(\tau)$ if and only if the loops $x(t)$ and $\widehat x(t)$ have the same holonomy. Note that, since we are working in dimension 2, the statement ``have the same holonomy'' actually makes sense, since $\SO(T_xM, T_xM)$ is canonically isomorphic to $\SO(T_{\widehat x}\widehat M, T_{\widehat x} \widehat M)$ for any $x \in M, \widehat x \in \widehat M$.

In particular, if we are rolling $M$ on $\real^2$ along a loop $x(t)$ in $M$, then holonomy of $x(t)$ can be identified with the angle that $M$ is rotated with after the rolling is complete relative to the standard basis in $\real^2$. This statement holds even when the curve $y(t)$ obtained in $\real^2$ by the rolling, is not a loop. Hence, requiring only trivial holonomy of $x(t)$ is not enough to solve the question of when $q(t)$ will be a loop. However, by again using Theorem~\ref{geodcurv}, we are able to find a solution.

\begin{corollary}\label{intkappa}
Let $x(t)$ be a $C^2$ curve parametrized by arc-length in a surface $M$. Then the following are equivalent.
\begin{itemize}
\item[(a)] Any rolling $q(t)$ of $M$ on $\real^2$ along $x(t)$ has the same initial and final configuration.
\item[(b)] $x(t)$ is a continuous loop with trivial holonomy and with geodesic curvature $k_g(t)$ satisfying 
\begin{equation} \label{eq:eIntegral0} \int_0^\tau \exp\left(i\int_0^t k_g(s) ds \right) dt = 0.\end{equation}
where $i$ is the imaginary unit.
\end{itemize}
Furthermore, $q(t)$ is a $C^1$ loop if and only if $x(t)$ satisfies (b) and $\int_0^\tau k_g(t) dt = 0 \, (\text{mod } 2\pi).$
\end{corollary}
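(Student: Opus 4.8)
The plan is to use Lemma~\ref{lemmaantidevroll} to write the rolling $q(t)$ of $M$ on $\real^2$ as $q(t)=\widehat f(t)\circ f(t)^{-1}$, where $f\colon[0,\tau]\to F(M)$ is the horizontal frame along $x(t)$ and $\widehat f\colon[0,\tau]\to F(\real^2)$ is horizontal over the anti-development $y(t)$; since $\real^2$ is flat, $\widehat f(t)$ has constant frame part, is determined by $y(t)$, and $q(t)$ equals $f(t)^{-1}$ under the standard identification $T_{y(t)}\real^2\cong\real^2$. Because $x$ is $C^2$, all of $f$, $y$ and $q$ are $C^2$ on $[0,\tau]$, so ``$q$ is a $C^1$ loop'' means exactly that the values and one-sided first derivatives of $q$ at $0$ and $\tau$ agree. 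The first step is to reduce this to the frame and the anti-development: I claim \emph{$q(t)$ is a $C^1$ loop if and only if $f(t)$ is a $C^1$ loop in $F(M)$ and $y(t)$ is a $C^1$ loop in $\real^2$.} The ``if'' direction is the smoothness of $(f,\widehat f)\mapsto\widehat f\circ f^{-1}$; for ``only if'', note that $x(t)=\pi q(t)$ and $y(t)=\widehat\pi q(t)$ are $C^1$ loops, that $q(0)=q(\tau)$ forces the holonomy of $x$ to be trivial (via $q(\tau)=q(0)\circ P_\tau^{-1}$, with $P_\tau$ the holonomy, exactly as in the proof of Corollary~\ref{corloop1}), hence $f(0)=f(\tau)$, and that $\dot q$ determines $\dot x$ and $\dot y$, so by horizontality~\eqref{liftder} the derivatives $\dot f=H_{\dot x}(f)$ and $\dot{\widehat f}=H_{\dot y}(\widehat f)$ also match at the endpoints.

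Next I would describe when $f$ and $y$ individually close up. By~\eqref{liftder}, $\dot f(t)=H_{\dot x(t)}(f(t))$, so once $f(0)=f(\tau)$ the derivatives $\dot f(0)$ and $\dot f(\tau)$ coincide precisely when $\dot x(0)=\dot x(\tau)$; hence $f$ is a $C^1$ loop exactly when $x$ is a loop with trivial holonomy and $\dot x(0)=\dot x(\tau)$. For $y$, write $\dot x(t)=\cos(v(t))f_1(t)+\sin(v(t))f_2(t)$ with $v$ a $C^1$ function, so $\dot v(t)=k_g(t)$ as in the proof of Corollary~\ref{corloop1}, and then $\dot y(t)=f(t)^{-1}\dot x(t)=(\cos v(t),\sin v(t))$. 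Thus $y(0)=y(\tau)$ is condition~\eqref{eq:eIntegral0} (the vector $\int_0^\tau(\cos v,\sin v)\,dt$ is a fixed rotation of the complex integral in~\eqref{eq:eIntegral0}, since $v(t)=v(0)+\int_0^t k_g$), and $\dot y(0)=\dot y(\tau)$ holds if and only if $v(\tau)\equiv v(0)\ (\text{mod }2\pi)$, i.e. $\int_0^\tau k_g(t)\,dt\equiv0\ (\text{mod }2\pi)$. Using $f_j(\tau)=f_j(0)$, the same congruence is also equivalent to $\dot x(0)=\dot x(\tau)$, so the two derivative requirements on $f$ and $y$ merge into the single condition $\int_0^\tau k_g\equiv0\ (\text{mod }2\pi)$.

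Combining the two steps, $q(t)$ is a $C^1$ loop if and only if ``$x$ is a loop with trivial holonomy and $y$ is a loop'' --- which is exactly (b), by the equivalence (a)$\Leftrightarrow$(b) --- together with $\int_0^\tau k_g\equiv0\ (\text{mod }2\pi)$, which is the claim; in the write-up the ``only if'' direction reads these conditions off the hypothesis directly and the ``if'' direction rebuilds the $C^1$ loops $f$, $\widehat f$ and composes. I expect the only delicate point to be the reduction in the first step --- that $q$ being a $C^1$ loop is equivalent to $f$ and $y$ being $C^1$ loops --- which combines smoothness of the frame-composition map, the holonomy identity $q(\tau)=q(0)\circ P_\tau^{-1}$, and the fact that horizontality pins $\dot f$ and $\dot{\widehat f}$ to $\dot x$ and $\dot y$. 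Everything else is the $\dot y$-versus-$k_g$ bookkeeping already carried out for Theorem~\ref{geodcurv} and Corollary~\ref{corloop1}.
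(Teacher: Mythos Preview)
Your argument is correct and follows essentially the same route as the paper's: both write $\dot y(t)=(\cos v(t),\sin v(t))$ with $v(t)=\int_0^t k_g(s)\,ds$, identify~\eqref{eq:eIntegral0} as the closing condition for $y$, and use trivial holonomy of $x$ to match the frames at the endpoints. You package the reduction via the decomposition $q=\widehat f\circ f^{-1}$ from Lemma~\ref{lemmaantidevroll} and argue by matching one-sided derivatives, whereas the paper works directly with the parallel frame $f_j(t)=q(t)^{-1}e_j(y(t))$ and observes that $q(t)$ is constant in the bases $\{f_j\}$, $\{e_j\}$ to get the $C^1$ statement; the content is the same.
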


\begin{proof}
Let $q(t)$ be an arbitrary rolling of $M$ on $\real^2$ along $x(t)$. Let $y(t)$ be the projection of $q(t)$ to $\real^2$. Let $\{e_1, e_2 \}$ be a rotation of the standard basis of vector fields on $\real^2$ such that $\dot y(0) = e_1(y(0))$, and consider a basis of parallel vector fields along $x(t)$ given by $\{ f_1(t), f_2(t) \} = \{ q(t)^{-1} e_1(y(t)), q(t)^{-1} e_2(y(t)) \}.$ Similarly to \eqref{parallelparm}, if we can write $v(t) = \int_0^t k_g(s) ds,$
and
$$\dot x(t) = \cos(v(t)) f_1(t) + \sin(v(t)) f_2(t), \quad \dot y(t) = \cos(v(t)) e_1(y(t)) + \sin(v(t)) e_2(y(t)).$$
Then it is clear that $y(t)$ is a loop if \eqref{eq:eIntegral0} holds and we see that both $\dot x(0)= \dot x(\tau)$ and $\dot y(0) = \dot y(\tau)$ holds if also $v(0) = v(\tau) \, (\text{mod } 2 \pi)$.

The only thing that remains to be proven, is that $q(t)$ is a $C^1$ loop when both $x(t)$ and $y(t)$ are $C^1$ loops. But this simply follows from the fact that $q(t)$ is constant in the basis $\{f_1, f_2\}$ and $\{e_1, e_2\}$ and the mappings $t \mapsto f_j(t)$ and $t \mapsto e_j(y(t))$ are obviously $C^1$ loops.
\end{proof}

\section{Existence of intrinsic rollings in dimension $n$}\label{sec:rolldimn}

We will now turn to the general case of $n$ dimensional manifolds. We provide results for when a rolling exists along a given pair of curves. At the same time, we want to show that rolling preserves local data, and when curves are not nice enough to have well defined geodesic curvatures, we will give an argument for why the anti-development can be considered as ``local data''. To build up intuition for this argument, we will start with the simplest case of geodesic, and then gradually look at more complicated curves.

\subsection{Rolling along a geodesics}
Assume that $q(t)$ is a rolling of $M$ on $\widehat M$, with initial configuration $q(0) = q_0\colon T_{x_0}M \to T_{\widehat x_0} \widehat M$. Assume that $\pi q(t) =  x(t)$ is a geodesic. Then $\widehat \pi q(t) = \widehat x(t)$ is also a geodesic, since $\frac{D}{dt} \dot{\widehat x}(t) = q(t) \frac{D}{dt} \dot x(t) = 0.$ Let $y(t)$ and $\widehat y(t)$ be respective anti-development curves of $x(t)$ and $\widehat x(t)$ corresponding to horizontal curves $f(t)$ and $\widehat f(t)$ in the respective frame bundles. These will both be straight lines since $\dot x(t)$ and $\dot{\widehat x}(t)$ are parallel vector fields. Write them as $y(t) = ty_0$ and $\widehat y(t) = t \widehat y_0$. By the definition of the Riemannian exponential and \eqref{antidev}, we know that,
$$x(t) = \exp_{x_0}\left(t f(0)(y_0)\right), \qquad \widehat x(t) = \exp_{\widehat x_0}\left(t \widehat f(0)(\widehat y_0)\right).$$
By the no slipping condition, we know that $q_0(f(0)(y_0)) = \widehat f(0)(\widehat y_0)$. Hence, for small values of $t$, $\widehat x(t)$ can be written as $\widehat x(t) = \exp_{\widehat x_0} \circ q_0 \circ \exp_{x_0}^{-1}(x(t)).$

\subsection{Curves with $C^{n-1}$-regularity}

In order to state an analogue of Theorem~\ref{geodcurv} for dimensions higher than 2, we need to find a suitable definition of geodesic curvature. The notion we use can be found in~\cite[Chapter 7.B]{Spivak4}, and is formulated in the following way.

Let $M$ be a oriented Riemannian manifold of dimension $n$ and let $x:[0,\tau]\to M$ be a curve of class $C^{n+1}$ parameterized by arc-length. Define the unit vector field $v_1(t) = \dot{x}(t)$ along $x(t)$, and let $\kappa_1(t) = \left|\frac{D}{dt}\, v_{1}(t) \right|$. Assuming $\kappa_1(t)$ never vanishes, there is a unique unit vector field $v_2(t)$ along $x(t)$ satisfying
$\frac{D}{dt}\, v_{1}(t) = \kappa_1(t) v_2(t)$.
Inductively, assume that $\kappa_{i}(t)$ and $v_{i+1}(t)$ are well-defined for $i < j$, where $j<n$ is fixed. Denote
\[\kappa_j(t) = \left|\dfrac{D}{dt}\, v_j + \kappa_{j-1}(t) v_{j-1}\right|.\]
If $\kappa_{j}(t)$ never vanishes, define $v_{j+1}$ to be the unit vector field along $x(t)$ satisfying
\begin{equation}\label{eq:defkappa}
\frac{D}{dt}\, v_{j}(t) + \kappa_{j-1}(t) v_{j-1}(t) = \kappa_{j}(t) v_{j+1}(t).
\end{equation}

Simple calculations show that $\langle v_{i}(t),v_{j}(t)\rangle=\delta_{i,j}$ for all $i,j$.

\begin{definition}
The unit vector field $v_j(t)$ in~\eqref{eq:defkappa} is called the the $j-$th Frenet vector field of $x(t)$. The function $\kappa_j(t)$ is called the $j-$th geodesic curvature of $x$.
\end{definition}

A $C^n$ curve $x(t)$ in $M$, is called $C^k$-regular, where $1 \leq k \leq n$, if
$$\left\{\dot x(t),\frac{D}{dt} \dot x(t), \frac{D^2}{dt^2} \dot x(t), \dots, \frac{D^k}{dt^k} \dot x(t) \right\},$$
are linearly independent for every $t$. The $k-$th Frenet vector field $v_k(t)$ exists if and only if $x(t)$ is $C^k$-regular. We want an alternative way to define $\kappa_{n-1}$ and $v_n$ which only requires the curve to be $C^n$ and have $C^{n-1}$-regularity and which also encodes the orientation into the curvatures. This approach can also be found in~\cite[Chapter 7.B]{Spivak4}.

Assume that $x(t)$ is a $C^n$ curve, which is $C^{n-1}$-regular, and consider the curvatures $\kappa_1, \dots, \kappa_{n-2}$ and the Frenet vector fields $v_1, \dots, v_{n-1}$ defined above. Then $v_n$ is defined as the unique unit vector field so that $v_1(t), \dots, v_n(t)$ is a positively oriented orthonormal basis for every $t$. Formally $v_n = \sharp (\star(\flat(v_1 \wedge \cdots \wedge v_{n-1})))$, where $\flat$ and $\sharp$ are the musical isomorpisms and $\star$ is the Hodge star operator corresponding to the metric on $M$. The geodesic curvature $\kappa_{n-1}$ is subsequently defined as
$$\kappa_{n-1}(t) = \left\langle v_n(t), \frac{D}{dt}\, v_{n-1}(t) + \kappa_{n-2}(t) v_{n-2}(t) \right\rangle.$$
This curvature can have both positive and negative values, and will change sign if we change orientation on $M$. Notice also that with this definition, $v_n$ is at least a $C^1$ vector field, while $\kappa_{n-1}$ is in general only continuous.

\begin{remark}
If a curve is $C^k$-regular for $k \geq 1$, this implies that the derivative of $x(t)$ never vanishes, which again implies that we are able to reparametrize the curve by arc-length without loosing differentiability at any point. Hence, instead of requiring that the curves are parametrized by arc-length, we could have just required $C^1$-regularity.
\end{remark}

\begin{lemma} \label{lemma:Cn}
Assume that $x(t)$ is a $C^n$ curve in $M$ and let $q(t)$ be a rolling of $M$ on $\widehat M$ along $x(t)$. Then the curve $q(t)$ and $\widehat x(t) = \widehat \pi q(t)$ are also $C^n$. Furthermore, if $x(t)$ is $C^k$-regular, $1 \leq k \leq n$, then so is $\widehat x(t)$.
\end{lemma}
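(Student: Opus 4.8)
The plan is to work locally and transfer regularity back and forth between the curve $x(t)$, the rolling $q(t)$, and the image curve $\widehat x(t)$ using the two ODEs that govern a rolling: the no slip condition and the no twist condition. First I would pick a smooth (at least $C^\infty$) local orthonormal frame $E_1, \dots, E_n$ for $M$ in a neighborhood of a point $x(t_0)$ and, similarly, a smooth local orthonormal frame $\widehat E_1, \dots, \widehat E_n$ near $\widehat x(t_0)$. Then $q(t)$ is encoded by the matrix-valued function $A(t) = (a_{ij}(t)) \in \SO(n)$ defined by $q(t) E_j(x(t)) = \sum_i a_{ij}(t) \widehat E_i(\widehat x(t))$, and the whole rolling is described by the triple $(x(t), \widehat x(t), A(t))$ together with a coupled first-order ODE system in local coordinates: the no twist condition says that parallel transport along $x$ is intertwined by $q(t)$ with parallel transport along $\widehat x$, which in the frames $E_j$, $\widehat E_i$ becomes a linear ODE $\dot A = \widehat\Gamma(\dot{\widehat x}) A - A \Gamma(\dot x)$ where $\Gamma$, $\widehat\Gamma$ are the connection-form matrices of the Levi-Civita connections in these frames (smooth in the base point), and the no slip condition says $\dot{\widehat x}^i = \sum_j a_{ij} \dot x^j$ in these frames (after identifying tangent vectors with their component vectors).

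The key step is then a bootstrapping argument. Suppose $x(t)$ is $C^n$; I want to show $\widehat x$ and $A$ are $C^n$ too. We know a priori that $q(t)$, hence $A(t)$, and $\widehat x(t)$ are absolutely continuous (they are part of an intrinsic rolling). Start with $x \in C^n \subset C^1$, so $\dot x$ is $C^0$; the no slip equation $\dot{\widehat x}^i = \sum_j a_{ij}\dot x^j$ shows $\dot{\widehat x}$ is the product of an absolutely continuous function $A$ and a continuous function $\dot x$, hence $\dot{\widehat x}$ is continuous, i.e. $\widehat x \in C^1$. Now feed this into the no twist ODE $\dot A = \widehat\Gamma(\dot{\widehat x})A - A\Gamma(\dot x)$: since $\Gamma, \widehat\Gamma$ are smooth in the base point and $x, \widehat x \in C^1$, the right-hand side is continuous, so $A \in C^1$. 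Then return to no slip: $\dot{\widehat x}^i = \sum_j a_{ij}\dot x^j$ is now a product of two $C^1$ functions (if $n \geq 2$, $x \in C^n$ gives $\dot x \in C^1$), so $\dot{\widehat x} \in C^1$, i.e. $\widehat x \in C^2$; back to no twist gives $A \in C^2$; and so on. Each pass through the pair (no slip, no twist) gains one order of differentiability, and this continues as long as $\dot x$ supplies enough regularity, i.e. up to $\widehat x, A \in C^n$, which is exactly the claim. One has to be slightly careful bookkeeping the orders — the no slip step needs $\dot x \in C^{k}$ and $A \in C^{k}$ to conclude $\widehat x \in C^{k+1}$, and the no twist step needs $x, \widehat x \in C^{k+1}$ to conclude $A \in C^{k+1}$ — but the induction closes cleanly and terminates at order $n$ because $\dot x \in C^{n-1}$ is the binding constraint.

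For the regularity-preservation part, I would argue directly from the no slip condition: by Lemma~\ref{lemmaantidevroll} or just from the definition, $q(t)$ is a linear isometry $T_{x(t)}M \to T_{\widehat x(t)}\widehat M$, and the no twist condition guarantees $q(t)$ commutes with covariant differentiation along the curves, so $q(t) \frac{D^j}{dt^j}\dot x(t) = \frac{D^j}{dt^j}\dot{\widehat x}(t)$ for all $j$ up to $n-1$ (using no slip for $j=0$ and induction using no twist for the covariant-derivative step; note both sides are defined since we have just established $x, \widehat x, q \in C^n$). Since $q(t)$ is a linear isomorphism at each $t$, it carries a linearly independent set to a linearly independent set, so $\{\dot x, \frac{D}{dt}\dot x, \dots, \frac{D^k}{dt^k}\dot x\}$ linearly independent at $t$ implies $\{\dot{\widehat x}, \frac{D}{dt}\dot{\widehat x}, \dots, \frac{D^k}{dt^k}\dot{\widehat x}\}$ linearly independent at $t$; this is precisely $C^k$-regularity of $\widehat x$.

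I expect the main obstacle to be purely one of careful bookkeeping in the bootstrap: making sure the connection coefficients $\Gamma$, $\widehat\Gamma$ are expressed in a frame smooth enough (smooth in the base point suffices), correctly tracking which of $x$, $\widehat x$, $A$ has which order at each stage, and handling the edge case $n = 1$ (where the statement is essentially vacuous) separately or noting it trivially. There is also the minor technical point that the argument is local — one covers $[0,\tau]$ by finitely many intervals on which all curves stay inside coordinate/frame neighborhoods — but since $C^n$ and $C^k$-regularity are local conditions, this causes no trouble.
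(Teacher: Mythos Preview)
Your proposal is correct and follows essentially the same route as the paper: write the rolling in local orthonormal frames as a coupled first-order ODE system (no slip for $\widehat x$, no twist for the matrix of $q$), observe the right-hand side depends smoothly on the unknowns and $C^{n-1}$ on $t$ through $\dot x$, and conclude $C^n$ regularity; then use that $q(t)$ intertwines covariant derivatives and is a linear isomorphism to transfer $C^k$-regularity. The only presentational difference is that the paper substitutes the no slip relation into the no twist equation so that $\dot{\widehat x}$ does not appear on the right-hand side, allowing a one-line appeal to standard ODE regularity, whereas you keep $\dot{\widehat x}$ in the $\dot A$ equation and run an explicit bootstrap---both arrive at the same conclusion.
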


\begin{proof}
To see that $q(t)$ and $\widehat x(t)$ are $C^n$, it is sufficient to show this around a point $q_0: T_{x_0}M \to T_{\widehat x_0} \widehat M$ in the image of $q(t)$. Pick local coordinates, $(\xi, U)$ and $(\widehat \xi, \widehat U)$ around $x_0$ and $\widehat x_0$, respectively, and choose orthonormal bases of vector fields $e_1, \dots, e_n$ on $U$ and $\widehat e_1, \dots, \widehat e_n$ on $\widehat U$. For simplicity, we will write vector fields $\frac{\partial}{\partial{\xi_i}}$ simply as $\partial_{\xi_i}$, and we will use similar conventions on $\widehat M$.

Define matrix-valued functions $\phi = (\phi_{ij})$ on $U$, and similarly $\widehat \phi = (\widehat \phi_{ij})$ on $\widehat U$, by
$$e_j = \sum_{i=1}^n \phi_{ij} \partial_{\xi_i} , \qquad \widehat e_j = \sum_{i=1}^n \widehat \phi_{ij} \partial_{\widehat \xi_i}.$$
We write $\dot x(t) =: \sum_{i =1}^n \dot \xi_j(t) \partial_{\xi_i}|_{x(t)},$ and similarly $ \dot{ \widehat x}(t) =: \sum_{i =1} \dot{\widehat \xi}_i(t) \partial_{\widehat \xi_i}|_{\widehat x(t)}$.
Denote by $[q]=(q_{ij})\in\SO(n)$ the matrix 
with entries $$q_{ij}(t)\colon = \langle \widehat e_i(\widehat x(t)), q(t) e_i(x(t)) \rangle$$ and the corresponding Christoffel symbols
$$\Gamma_{kj}^i(x) = \langle e_i(x), \nabla_{e_k(x)} e_j(x) \rangle, \quad \widehat \Gamma_{kj}^i(\widehat x) = \langle \widehat e_i(\widehat x), \nabla_{\widehat e_k(\widehat x)} \widehat e_j(\widehat x)\rangle.$$
Consider the antisymmetric matrices $\Gamma_k := (\Gamma_{kj}^i)$, and $\Gamma_k := (\Gamma_{kj}^i)$. Then, from \cite{GGMS}, we need to solve the equations
\begin{eqnarray} 
\label{solve1} \dot{ \widehat \xi}_i &=& \sum_{j,k,l =1} \widehat \phi_{il} q_{lk} \phi^{kj} \dot \xi_j,\\
 \label{solve2} \dot q_{ij} &=& \sum_{k,l,r =1}^n \phi^{lk} \dot \xi_k \left( q_{ir}\Gamma_{lj}^r - \sum_{s=1}^n q_{rj} q_{sl} \widehat \Gamma_{sr}^i \right),
 \end{eqnarray}
where, as usual, $\phi^{-1}=(\phi^{ij})$. In matrix form, they read
\begin{eqnarray*}
\dot{\widehat \xi} &=& \widehat \phi [q] \phi^{-1} \dot\xi,\\
\dot{[q]} &=& \sum_{k,l =1}^n \phi^{lk} \dot \xi_k \left([q] \Gamma_{l}
- \sum_{s=1}^n q_{sl} \widehat \Gamma_s [q] \right).
\end{eqnarray*}
Obviously, it follows that these equations have a $C^n$ solution if $\xi(t)$ is $C^n$.

If $x(t)$, is $C^k$-regular, then from the fact that $q(t)$ is an invertible linear map for each $t$, the vectors
$$\left\{ q(t) \dot x(t), q(t)\frac{D}{dt} \dot x(t), \dots, q(t) \frac{D^k}{dt^k} \dot x(t) \right\} = \left\{ \dot{\widehat x}(t), \frac{D}{dt} \dot{\widehat x}(t), \dots, \frac{D^k}{dt^k} \dot{\widehat x}(t) \right\},$$
must also be linearly independent.
\end{proof}

With all of this terminology set up, we are able to present a generalization of Theorem~\ref{geodcurv}.

\begin{theorem} \label{curvaturetheorem1}
Let $x\colon [0,\tau]\to M$ and $\widehat x\colon [0,\tau]\to\widehat M$ be two $C^n$ curves, parametrized by arc-length, that are also $C^{n-1}$-regular. Then, there is a rolling $q(t)$ such that
$$\pi q(t) = x(t), \qquad \widehat \pi q(t) = \widehat x(t)$$
if and only if $\kappa_j = \widehat{\kappa}_j,$ for $j=1,\ldots, n,$ where $\kappa_j$ and $\widehat \kappa_j$ are the respective geodesic curvatures of $x(t)$ and $\widehat x(t)$. 
\end{theorem}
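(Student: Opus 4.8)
The plan is to mimic the proof of Theorem~\ref{geodcurv}, replacing the frame $\{\dot x,\nu\}$ by the full Frenet frame $\{v_1,\dots,v_n\}$. The elementary fact I will use throughout is that for a $C^n$, $C^{n-1}$-regular curve the Frenet recursion can be written uniformly as
\[
\frac{D}{dt}\,v_j(t)=\kappa_j(t)\,v_{j+1}(t)-\kappa_{j-1}(t)\,v_{j-1}(t),\qquad j=1,\dots,n,
\]
with the conventions $v_0=v_{n+1}=0$ and $\kappa_0=\kappa_n=0$: for $j\le n-1$ this is exactly~\eqref{eq:defkappa}, and for $j=n$ it follows by differentiating the orthonormality relations $\langle v_n,v_i\rangle=\delta_{ni}$ and inserting~\eqref{eq:defkappa}. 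Recall also that $C^{n-1}$-regularity forces $\kappa_j$ to be nowhere zero for $j\le n-2$, so that $v_{j+1}$ is the genuine normalization of $\frac{D}{dt}v_j+\kappa_{j-1}v_{j-1}$, while $v_n$ (and hence the sign of $\kappa_{n-1}$) is pinned down by the orientation of $M$. The two-dimensional case is Theorem~\ref{geodcurv}, so I may assume $n\ge 3$.

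For the forward implication, suppose a rolling $q(t)$ along $x(t)$ and $\widehat x(t)$ exists. By Lemma~\ref{lemma:Cn} the curve $\widehat x(t)$ is again $C^n$ and $C^{n-1}$-regular, so its Frenet data $\{\widehat v_j\}$, $\{\widehat\kappa_j\}$ is defined. I will show by induction on $j$ that $q(t)v_j(t)=\widehat v_j(t)$ and $\kappa_j=\widehat\kappa_j$. The base case is the no slip condition, $q(t)\dot x(t)=\dot{\widehat x}(t)$, i.e. $q(t)v_1=\widehat v_1$. For the inductive step with $j\le n-2$, apply $q(t)$ to the displayed identity and move $q(t)$ past $\frac{D}{dt}$ by the no twist condition; the resulting right-hand side is $\widehat\kappa_j\widehat v_{j+1}-\widehat\kappa_{j-1}\widehat v_{j-1}$, which by the inductive hypothesis equals $\widehat\kappa_j\widehat v_{j+1}-\kappa_{j-1}\widehat v_{j-1}$, while the left-hand side is $\kappa_j\,q(t)v_{j+1}-\kappa_{j-1}\widehat v_{j-1}$. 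Comparing norms (using that $q(t)$ is an isometry and $|v_{j+1}|=|\widehat v_{j+1}|=1$) gives $\kappa_j=\widehat\kappa_j$, and then $q(t)v_{j+1}=\widehat v_{j+1}$ since $\kappa_j\ne 0$. For the last curvature $\kappa_{n-1}$, which may be negative, the norm comparison only yields $|\kappa_{n-1}|=|\widehat\kappa_{n-1}|$; here one instead notes that the positively oriented map $q(t)$ carries the positively oriented orthonormal basis $v_1,\dots,v_n$ to a positively oriented orthonormal basis agreeing with $\widehat v_1,\dots,\widehat v_{n-1}$ in its first $n-1$ entries, whence $q(t)v_n=\widehat v_n$ by uniqueness; plugging this into the defining formula for $\kappa_{n-1}$, together with the facts that $q(t)$ preserves inner products and intertwines $\frac{D}{dt}$, gives $\kappa_{n-1}=\widehat\kappa_{n-1}$ including sign.

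For the converse, assume all geodesic curvatures coincide. Define $q(t)\colon T_{x(t)}M\to T_{\widehat x(t)}\widehat M$ to be the unique linear map with $q(t)v_j(t)=\widehat v_j(t)$ for $j=1,\dots,n$; since both frames are positively oriented and orthonormal, $q(t)\in Q$, and since the Frenet vector fields are all at least $C^1$ (the least regular being $v_{n-1}$ and $v_n$), the curve $q(t)$ is $C^1$, in particular absolutely continuous, hence an admissible candidate. The no slip condition is immediate from $q(t)v_1=\widehat v_1$. For the no twist condition it suffices to check $q(t)\frac{D}{dt}v_j=\frac{D}{dt}q(t)v_j$ for each frame field $v_j$; applying the displayed Frenet identity on both manifolds and using $\kappa_j=\widehat\kappa_j$ for every $j$, both sides equal $\kappa_j\widehat v_{j+1}-\kappa_{j-1}\widehat v_{j-1}$. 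Thus $q(t)$ is a rolling along $x(t)$ and $\widehat x(t)$.

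The computations are essentially forced once the uniform Frenet identity is in place, so I expect no serious conceptual difficulty; the main technical point is careful bookkeeping at the boundary indices $j=1$, $j=n-1$, $j=n$ — in particular handling the orientation-defined curvature $\kappa_{n-1}$ (whose sign cannot be recovered from lengths alone) and the vanishing-curvature conventions — together with tracking exactly how much regularity the Frenet frame inherits from the $C^n$-hypothesis, so that the $q(t)$ produced in the converse direction is genuinely an absolutely continuous curve in $Q$.
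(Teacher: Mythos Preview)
Your proof is correct and follows essentially the same approach as the paper: both directions hinge on the map $q(t)$ sending the Frenet frame of $x(t)$ to that of $\widehat x(t)$, with the forward implication obtained by inducting on the Frenet recursion using the no-slip and no-twist conditions, and the converse by defining $q(t)$ via the frames and verifying the no-twist identity on the basis vectors. Your write-up is in fact more careful than the paper's on two points the paper leaves implicit: the orientation argument needed to pin down the sign of $\kappa_{n-1}$, and the regularity of the Frenet frame ensuring that the constructed $q(t)$ is genuinely absolutely continuous.
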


\begin{proof}
Write $\{v_j\}_{j=1}^n$ and $\{ \widehat v_j\}_{j=1}^n$ for the Frenet vector fields along $x$ and $\widehat x$.

Assume first that there is a rolling $q(t)$ along $x(t)$ and $\widehat x(t)$. From the no slip condition, we know that $q(t) v_1 (t) = \widehat v_1(t)$. From the non twist property and induction, we have $q(t) \kappa_{j-1}(t) v_j = \widehat \kappa_{j-1}(t) \widehat v_j(t)$. Hence $\kappa_{j-1}(t)=\widehat \kappa_{j-1}(t)$.

Conversely, assume that $\kappa_{j-1}(t)=\widehat \kappa_{j-1}(t)$. Define $q(t)$ by the formula $\widehat{v}_j(t)=q(t) v_j(t)$.
In order to see that $q(t)$ is a rolling, we need to show that if $w(t)=\sum_{j=1}^nw_j(t)v_j(t)$ is any vector field along $x(t)$, we have $\frac{D}{Dt} q(t) w(t) = q(t) \frac{D}{dt} w(t).$
This equality holds since (we introduce the notation $\kappa_0 = 0$ to simplify formulas)
\begin{multline*}
q(t) \frac{D}{dt}\, w(t)  = q(t)\sum_{j=1}^n \frac{D}{dt}\, w_j(t) v_j(t) =\\
= \sum_{j=1}^n \Big(\dot{w}_j(t) \widehat{v}_j(t) + w_j(t) \big(-\kappa_{j-1}(t) \widehat{v}_{j-1}(t) + \kappa_{j}(t) \widehat{v}_{j+1}(t)\big) \Big)\\
=\sum_{j=1}^n\Big(\dot w_j(t)\widehat v_{j}(t)+w_j(t)\frac{D}{dt}\,\widehat v_j(t)\Big)= \frac{D}{dt}\, q(t) w(t).
\end{multline*}
This concludes the proof.
\end{proof}

\subsection{Condition for general curves} \label{sec:AbsConCurves}

Not all curves in higher dimensions have well defined Frenet vector fields or curvatures. Even the failure of this to exist in one point, can hinder the existence of a rolling.

\begin{example} \label{exonepoint}
Consider the curves in $\real^3$
$$y(t) = \left\{\begin{array}{ll} \big(t, e^{-\frac{1}{t^2}}, 0 \big) & \text{if } t \neq 0 \\
\big(0,0,0\big) & \text{if } t = 0 \end{array} \right. ,
\quad \widehat y(t) = \left\{\begin{array}{ll} \big(t, e^{-\frac{1}{t^2}}, 0\big) & \text{if } t < 0 \\
\big(0,0,0\big) & \text{if } t = 0 \\
\big(t, 0, e^{-\frac{1}{t^2}}\big) & \text{if } t > 0 \end{array} \right. .$$
Both curves have coinciding curvatures when restricted to either $t >0$ or $t<0$. However, we cannot construct a rolling of $\real^3$ on itself along $y(t)$ and $\widehat y(t)$, by the following result.
\end{example}

\begin{proposition}
Let $q(t): [0,\tau] \to Q$ be a rolling of $M$ on $\widehat M$. Let $0 \leq a < b < c \leq \tau$, and assume that $x(t) = \pi q(t)$ is $C^n$ and $C^{n-1}$-regular on $(a,b)$ and $(b,c)$. Let $v_1, \dots, v_n$ and $w_1, \dots, w_n$ be the Frenet frames of $x(t)$ on respectively $(a,b)$ and $(b,c)$. Extend both frames to the point $b$ by continuity. Let $\widehat v_1, \dots, \widehat v_n$ and $\widehat w_1, \dots, \widehat w_n$ be defined similarly for $\widehat x = \widehat \pi q(t)$. Then for any $i,j$,
$$\langle \widehat v_i(b), \widehat w_j(b) \rangle = \langle v_i(b), w_j(b) \rangle.$$ 
\end{proposition}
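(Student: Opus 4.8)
The plan is to exploit the fact that a rolling $q(t)$ carries Frenet data on one side to Frenet data on the other, on each open interval separately, and then to pass to the limit at $b$. Concretely, the key observation is that on $(a,b)$ the matrix $q(t)$ sends the Frenet frame $v_1(t),\dots,v_n(t)$ of $x(t)$ to the Frenet frame of $\widehat x(t)$; by the same reasoning as in the proof of Theorem~\ref{curvaturetheorem1}, the no slip condition gives $q(t)v_1(t)=\widehat v_1(t)$, and then the no twist condition together with the inductive Frenet relations \eqref{eq:defkappa} forces $q(t)v_j(t)=\widehat v_j(t)$ for all $j$ (using that $\kappa_{j-1}=\widehat\kappa_{j-1}$ on $(a,b)$, which is itself a consequence of the no twist condition applied to $v_{j-1}$). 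Identically, on $(b,c)$ we get $q(t)w_j(t)=\widehat w_j(t)$ for all $j$. Thus for $t\in(a,b)$, $\langle\widehat v_i(t),\widehat v_j(t)\rangle=\langle v_i(t),v_j(t)\rangle=\delta_{ij}$ trivially, but more to the point, the $\widehat v_i$ and $\widehat w_j$ are both images under the \emph{same} linear isometry $q(t)$ of $v_i$ and $w_j$ respectively — except that they are defined on disjoint intervals, so we cannot yet compare them at a common parameter.

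The second step handles this by continuity at $b$. By Lemma~\ref{lemma:Cn} applied on $(a,b)$ and on $(b,c)$, the curves $q(t)$ and $\widehat x(t)$ are $C^n$ and $C^{n-1}$-regular on each of these intervals, so the Frenet frames $\widehat v_i$ on $(a,b)$ and $\widehat w_j$ on $(b,c)$ extend continuously to $b$; moreover $q$ itself is continuous at $b$ (it is absolutely continuous on all of $[0,\tau]$, hence in particular continuous there). Taking limits $t\uparrow b$ in $q(t)v_i(t)=\widehat v_i(t)$ gives $q(b)v_i(b)=\widehat v_i(b)$, and taking limits $t\downarrow b$ in $q(t)w_j(t)=\widehat w_j(t)$ gives $q(b)w_j(b)=\widehat w_j(b)$. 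Now both identities live at the single parameter $b$, with the single isometry $q(b)\colon T_{x(b)}M\to T_{\widehat x(b)}\widehat M$. Since $q(b)$ preserves inner products,
\[
\langle \widehat v_i(b),\widehat w_j(b)\rangle=\langle q(b)v_i(b),q(b)w_j(b)\rangle=\langle v_i(b),w_j(b)\rangle,
\]
which is exactly the claim.

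The main obstacle I expect is the bookkeeping in the first step: one must be careful that the no twist condition does indeed propagate $q(t)v_j(t)=\widehat v_j(t)$ all the way up the Frenet ladder on the open interval, including the top index $n$ where $v_n$ is defined via the Hodge-star/orientation construction rather than by a curvature relation. For $j<n$ the inductive argument is literally the one in Theorem~\ref{curvaturetheorem1}. For $j=n$ one argues that $q(t)$ is a \emph{positively oriented} isometry, so it sends the positively oriented orthonormal basis $v_1(t),\dots,v_n(t)$ to a positively oriented orthonormal basis; since $q(t)v_i(t)=\widehat v_i(t)$ for $i\le n-1$ and $\widehat v_n(t)$ is by definition the unique unit vector completing $\widehat v_1(t),\dots,\widehat v_{n-1}(t)$ to a positively oriented orthonormal basis, we must have $q(t)v_n(t)=\widehat v_n(t)$ as well. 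A minor additional point is justifying that the one-sided limits of the Frenet frames at $b$ exist — but this is precisely the content of "extend both frames to the point $b$ by continuity" in the hypothesis, so it may be taken as given; what actually needs checking is only that $q$ and $\widehat x$ are continuous at $b$, which follows from Lemma~\ref{lemma:Cn} and absolute continuity of $q$.
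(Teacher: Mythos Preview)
Your proposal is correct and follows exactly the approach the paper takes: the paper's proof is the one-liner ``This is a simple consequence of the equalities $q(t) v_i(t) = \widehat v_i(t)$ and $q(t) w_j(t) = \widehat w_j(t)$,'' and you have supplied precisely the details behind those equalities (the inductive Frenet argument from Theorem~\ref{curvaturetheorem1}, the orientation argument for the top index, and the passage to the limit at $b$ using continuity of $q$). There is nothing to add.
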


\begin{proof} This is a simple consequence of the equalities $q(t) v_i(t) = \widehat v_i(t)$ and $q(t) w_j(t) = \widehat w_j(t)$. \end{proof}

Another way of stating this, is that if there is an isolated point where $C^{n-1}$-regularity fails, permitting a possible rotation in the Frenet frame, then this rotation should be the same for both $x(t)$ and $\widehat x(t)$. From this, it seems that rolling without twisting or slipping preserves the local structure very well. We want to show that a rolling has this property for any absolutely continuous curve. However, it seems unclear what ``local structure'' means for a curve which does not have a Frenet frame, nor geodesic curvatures.

Our idea is that anti-development curves can be seen as a generalization of the geodesic curvatures. Note that we can define a curve $\kappa(t) = (\kappa_1(t), \kappa_2(t), \dots, \kappa_{n-1}(t))$ in $\real^{n-1}$ describing the local structure of a $C^{n-1}$-regular curve $x(t)$ in $M$, parame\-trized by arc-length. If we drop the last requirement, we also need an $n$-th coordinate in form of the speed $s(t) = |\dot x(t)|$ in addition to the curvatures. Given a starting point and an initial value for the corresponding Frenet frame, the curvatures and $s(t)$ determine a curve uniquely, which always exists for short time, and for all time if $M$ is complete. Furthermore, if $M$ is also connected and simply connected with a constant sectional curvature, in addition to being complete, then the data $(\kappa_1(t), \dots, \kappa_{n-1}(t))$ and $s(t)$ determines $x(t)$ uniquely up to an isometry (see \cite[Corollary 4]{Spivak4}).

Similarly, an anti-development $y(t)$ of a curve $x(t)$ consists of $n$ coordinates. Given a starting point and an initial value for the corresponding horizontal curve in the frame bundle, $y(t)$ determines a curve uniquely, which always exists for short time, and for all time if $M$ is complete. If $x(t)$ happens to be $C^{n-1}$-regular, then the geodesic curvatures of $x(t)$ and the speed is encoded into $y(t)$, since the anti-development will have the same speed and geodesic curvatures. We want to show that for a pair of general curves, their anti-development curves determine if there exists a rolling along these curves, in the same way the curvatures did for $C^{n-1}$-regular curves.

In Subsection \ref{sec:constantK}, we will complete the analogy by showing that in a complete, connected, simply connected manifold $M$ of constant curvature, any curve is uniquely determined up to isometry by its anti-development.

\begin{theorem} \label{generalcurve}
Let $x\colon[0,\tau]\to M$ and $\widehat x\colon[0,\tau]\to \widehat M$ be absolutely continuous curves. Let $y(t)$ and $\widehat y(t)$ be any anti-development curves of $x(t)$ and $\widehat x(t)$, respectively. Then there is a rolling $q(t)$, with
$$\pi q(t) = x(t), \qquad \widehat \pi q(t) = \widehat x(t),$$
if and only if $\widehat y(t) = \iota(y(t))$ for some $\iota \in \SO(n)$. In other words, there exist a rolling along a pair of curves $x(t)$ and $\widehat x(t)$ if and only if they have the same set of anti-development curves.
\end{theorem}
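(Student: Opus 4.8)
The plan is to reduce everything to a rolling of $\real^n$ on itself via Proposition~\ref{PropAntidev}, and then show that a rolling of $\real^n$ on $\real^n$ along $y(t)$ and $\widehat y(t)$ exists precisely when $\widehat y = \iota \circ y$ for a fixed $\iota \in \SO(n)$. By Proposition~\ref{PropAntidev}, a rolling of $M$ on $\widehat M$ along $x(t)$ and $\widehat x(t)$ exists if and only if a rolling of $\real^n$ on itself along $y(t)$ and $\widehat y(t)$ exists, so it suffices to settle the flat case. The ``set of anti-development curves'' reformulation is then immediate: the anti-developments of $x(t)$ are exactly the curves of the form $\iota \circ y(t)$ as $\iota$ ranges over $\SO(n)$ (changing the initial frame $f(0)$ by a rotation rotates the anti-development by the same rotation, since the horizontal curve and hence the integrand in Remark~\ref{remarkantidev} transforms equivariantly), and likewise for $\widehat x(t)$; so ``$\widehat y = \iota y$ for some $\iota$'' is equivalent to the two curves having the same $\SO(n)$-orbit of anti-developments.

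For the flat case, I would argue directly from the definitions. Suppose $\widehat y(t) = \iota(y(t))$ with $\iota \in \SO(n)$ constant. Working in the standard global parallelism of $\real^n$, the frame bundle is $\real^n \times \GL(n)$ and horizontal curves over $y(t)$ are constant in the second factor, so we may take $f(t) \equiv \mathrm{id}$ over $y(t)$ and $\widehat f(t) \equiv \mathrm{id}$ over $\widehat y(t)$ only after accounting for the orientation; more honestly, pick the horizontal curve $\widehat f(t)$ over $\widehat y(t)$ whose constant frame is $\iota$. Then $q(t) := \widehat f(t) \circ f(t)^{-1} = \iota$ for all $t$, which is absolutely continuous (constant!) and lies in $Q = \SO(n)$. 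Equation~\eqref{antidev} gives $\dot y(t) = \dot x(t) = f(t)(\dot y(t))$ and $\dot{\widehat y}(t) = \widehat f(t)(\dot{\widehat y}(t))$; since $\widehat y = \iota y$ we get $\dot{\widehat y}(t) = \iota \dot y(t) = q(t) \dot y(t)$, which is the no-slip condition (I). Constant vector fields along $y(t)$ are parallel, $q(t) = \iota$ sends them to constant, hence parallel, vector fields along $\widehat y(t)$, giving the no-twist condition (II). So $q(t) = \iota$ is the desired rolling.

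Conversely, suppose a rolling $q(t)$ of $\real^n$ on $\real^n$ along $y(t)$ and $\widehat y(t)$ exists. By Lemma~\ref{lemmaantidevroll} there are horizontal curves $f(t), \widehat f(t)$ in $F(\real^n)$ over $y(t)$, $\widehat y(t)$ with $q(t) = \widehat f(t) \circ f(t)^{-1}$. In the flat case horizontal curves are constant in the trivialization $F(\real^n) = \real^n \times \SO(n)$, so $f(t) \equiv A$ and $\widehat f(t) \equiv B$ for fixed $A, B \in \SO(n)$, whence $q(t) \equiv BA^{-1} =: \iota$ is constant. The no-slip condition then reads $\dot{\widehat y}(t) = \iota \dot y(t)$ for almost every $t$; integrating from $0$ and using $y(0) = \widehat y(0) = 0$ gives $\widehat y(t) = \iota(y(t))$ for all $t$. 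This completes the equivalence.

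The only genuinely delicate point is the bookkeeping around orientations and initial frames in the change-of-anti-development step: one must check that the freedom in choosing the initial horizontal frame $f(0) \in \mathcal F_0(\real^n)$ — equivalently, in choosing among the $\SO(n)$-worth of anti-developments — is exactly absorbed into the constant $\iota$, and that nothing is lost by restricting to orthonormal frames $F(\real^n)$ rather than general frames $\mathcal F(\real^n)$ (this is fine because the Levi-Civita connection on $\real^n$ preserves orthonormality, as noted after Remark~\ref{remarkantidev}). I expect this to be routine but it is where care is needed; the rest is a direct unwinding of the no-slip and no-twist conditions together with the triviality of parallel transport on $\real^n$.
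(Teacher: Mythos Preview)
Your proposal is correct and follows essentially the same route as the paper: reduce to the flat case via Proposition~\ref{PropAntidev}, then prove the Euclidean lemma (the paper's Lemma~\ref{generalcondition}) that a rolling of $\real^n$ on itself along $y(t)$ and $\widehat y(t)$ exists iff $\widehat y=\iota\circ y$ for some $\iota\in\SO(n)$. The only cosmetic difference is that you obtain the constancy of $q(t)$ from Lemma~\ref{lemmaantidevroll} together with the triviality of horizontal curves in $F(\real^n)=\real^n\times\SO(n)$, whereas the paper invokes \cite[Lemma~1]{GGMS} directly; both amount to the observation that parallel orthonormal frames in $\real^n$ are constant.
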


The proof of this theorem follows by combining Proposition \ref{PropAntidev} with the following lemma.

\begin{lemma} \label{generalcondition} 
Let $y,\widehat y\colon[0,\tau]\to\real^n$ be two absolutely continuous curves such that $y(0)= \widehat y(0) = 0$. Then there exists a rolling of $\real^n$ on itself, along $y(t)$ and $\widehat y(t)$ if and only if there is an $\iota \in \SO(n)$ so that $\iota(y(t)) =\widehat y(t).$
\end{lemma}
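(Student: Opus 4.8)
The plan is to exploit the flatness of $\real^n$ to trivialize everything. Since the Levi-Civita connection on $\real^n$ is flat and complete, parallel transport identifies every tangent space $T_x\real^n$ canonically with $\real^n$, and the covariant derivative $\frac{D}{dt}$ along any curve becomes ordinary componentwise differentiation. Under this identification the configuration space becomes $Q \cong \real^n \times \real^n \times \SO(n)$, and a rolling of $\real^n$ on itself along $y(t)$ and $\widehat y(t)$ is the same thing as an absolutely continuous path $A\colon [0,\tau] \to \SO(n)$ such that the no slip condition reads $\dot{\widehat y}(t) = A(t)\dot y(t)$ for a.e.\ $t$, while the no twist condition reads $\frac{d}{dt}\big(A(t) Z(t)\big) = A(t)\dot Z(t)$ for every absolutely continuous $Z\colon [0,\tau]\to\real^n$ and a.e.\ $t$.

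First I would prove the ``only if'' direction. Given such a rolling, apply the no twist condition to the constant vector fields $Z(t) \equiv v$, $v \in \real^n$: it gives $\dot A(t) v = 0$ for a.e.\ $t$. Letting $v$ run over a basis of $\real^n$ yields $\dot A(t) = 0$ a.e., and since $A$ is absolutely continuous this forces $A(t) \equiv \iota$ for some fixed $\iota \in \SO(n)$. Then the no slip condition becomes $\dot{\widehat y}(t) = \iota\, \dot y(t)$ for a.e.\ $t$, and integrating from $0$ to $t$ using $y(0) = \widehat y(0) = 0$ gives $\widehat y(t) = \iota(y(t))$ for all $t$.

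For the ``if'' direction, suppose $\widehat y(t) = \iota(y(t))$ for some $\iota \in \SO(n)$. Define $q(t)$ to be the constant path equal to $\iota$, viewed as the isometry $T_{y(t)}\real^n \to T_{\widehat y(t)}\real^n$ under the canonical trivialization; it is smooth, hence absolutely continuous, and its projections to the two base factors are precisely $y(t)$ and $\widehat y(t)$. The no slip condition holds because $\dot{\widehat y}(t) = \iota\,\dot y(t) = q(t)\dot y(t)$, and the no twist condition holds because $q(t)$ is a constant linear map, so $\frac{D}{dt}\big(q(t) Z(t)\big) = q(t)\frac{D}{dt} Z(t)$ for every vector field $Z$ along $y(t)$. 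Hence $q(t)$ is a rolling along $y(t)$ and $\widehat y(t)$.

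The only place that needs a little care is the very first step: setting up the identification $Q \cong \real^n\times\real^n\times\SO(n)$ and translating Definition~\ref{intrinsdef} into this language honestly, in particular verifying that the Ehresmann/covariant-derivative bookkeeping really reduces to ordinary differentiation. Once that is in place there is no genuine obstacle — the substance of the lemma is simply the observation that flatness prevents the rotational part of the configuration from changing, after which the conclusion is a one-line integration.
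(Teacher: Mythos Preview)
Your proof is correct and follows essentially the same route as the paper. Both arguments use that in $\real^n$ parallel vector fields are constant, so the no twist condition forces the rotational part of the rolling to be constant; the paper obtains this by invoking a general lemma from \cite{GGMS} about the matrix of $q(t)$ in parallel frames, whereas you derive it directly by plugging constant vector fields into the no twist condition, but the content is identical, and the remaining integration step is the same in both.
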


\begin{proof}
Assume that a rolling exists. Let $r = (r_1, \dots, r_n)$ be the canonical coordinates in $\real^n$, and let $e_j := \partial_{r_j}$. A general property of a rolling found in \cite[Lemma 1]{GGMS} states that if $q(t)$ is a rolling of $M$ on $\widehat M$ along $x(t)$ and $\widehat x(t)$, and if $f_1, \dots, f_n$ and $\widehat f_1, \dots, \widehat f_n$ are orthonormal bases of parallel vector fields along $x(t)$ and $\widehat x(t)$, then the matrix $(q_{ij}(t))$, given by $$q_{ij}(t) =  \langle q(t) f_j(t), \widehat f_i(t) \rangle$$ is a constant matrix in $\SO(n)$. In our case, this means that
$$(q_{ij}) = \big(\langle e_i(\widehat y(t)), q(t) e_i(y(t)) \rangle\big)$$
is constant in $\SO(n)$.

From the no slip condition
$$\dot{\widehat y} = \sum_{i = 1}^n \dot{\widehat y}_i e_i = \sum_{i,j=1}^n q_{ij} \dot y_j e_i.$$
Solving this, we obtain $\widehat y(t) = (\widehat y_1(t), \dots, \widehat y_n(t))$, where $\widehat y_j(t) = \sum_{j=1} q_{ij} y_j(t).$
Hence, by setting $\iota=(q_{ij})$ in the standard basis, then $\widehat y(t) = \iota( y(t)).$

The converse follows by taking $q(t) = \iota_{*,y(t)}$.
\end{proof}

Theorem \ref{generalcurve} may be seen as a generalization of Theorem \ref{curvaturetheorem1} since, if we have a curve in $\real^n$ starting at 0 whose Frenet vector fields all exist, then it is determined, up to a linear isometry, by its curvatures.

\subsection{Distribution along curves}

As seen in Example \ref{exonepoint}, if a geodesic curvature (not the top one) of a curve vanishes somewhere, then it is difficult to determine the existence of a rolling.
However, if a certain number of geodesic curvatures vanish identically, we can obtain some results.

\begin{definition}
Let $x\colon [0,\tau]\to M$ be an absolutely continuous curve. For each $s\in(0,\tau)$, associate a $j$-dimensional subspace $V(s) \subset T_{x(s)}M$, so that $V$ forms a distribution along $x$. Then $V$ is called parallel along $x$, if it is closed under parallel transport.
\end{definition}

An equivalent characterization for parallel distributions is that they are closed under covariant derivative, as observed in~\cite[Prelemma 7]{Spivak4}.

\begin{lemma} \label{subspacelemma}
Let $x\colon [0,\tau]\to M$ be an absolutely continuous curve. Then there is a parallel distribution $V$ along $x(t)$ of rank $k$ containing $\dot x(t)$ if and only if there is a lifting to a curve $f(t)$ in $F(M)$ of parallel vector fields along $x(t)$, such that the corresponding anti-development
\begin{equation} \label{planecurve}
y(t) = \int_0^t f^{-1}(s)(\dot x(s)) ds,
\end{equation}
is a curve in $\real^{k} \times \{0\} \subseteq \real^n.$
\end{lemma}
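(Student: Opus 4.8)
The plan is to prove both implications by relating parallel transport along $x(t)$ directly to the motion of the lifted frame $f(t)$. The key observation is that a horizontal curve $f\colon[0,\tau]\to F(M)$ with $\pr_M f(t)=x(t)$ is, by definition, a frame $(f_1(t),\dots,f_n(t))$ of parallel vector fields along $x$, so it implements parallel transport: if $P_t\colon T_{x(0)}M\to T_{x(t)}M$ denotes parallel transport along $x$, then $f_i(t)=P_t f_i(0)$. Consequently a fixed coordinate subspace $W=\real^k\times\{0\}\subseteq\real^n$ is carried by $f(t)$ to the subspace $V(t):=f(t)(W)=\spn\{f_1(t),\dots,f_k(t)\}\subseteq T_{x(t)}M$, and since each $f_i(t)$ is parallel, $V(t)$ is automatically closed under parallel transport, i.e.\ parallel along $x$. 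This correspondence $W\leftrightarrow V$ is the backbone of both directions.

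For the ``only if'' direction, I would start from a parallel distribution $V$ of rank $k$ along $x$ with $\dot x(t)\in V(t)$. Choose an orthonormal basis $f_1(0),\dots,f_k(0)$ of $V(0)$ and extend it to an orthonormal basis $f_1(0),\dots,f_n(0)$ of $T_{x(0)}M$ (positively oriented, so that $f(0)\in F(M)$), then define $f(t)$ as the horizontal lift of $x(t)$ through $f(0)$, i.e.\ each $f_i(t)$ is the parallel transport of $f_i(0)$. Because $V$ is parallel, $V(t)=\spn\{f_1(t),\dots,f_k(t)\}$ for all $t$. Now write $y(t)$ as in \eqref{planecurve} via Remark~\ref{remarkantidev}; since $\dot x(s)\in V(s)=\spn\{f_1(s),\dots,f_k(s)\}$, the vector $f^{-1}(s)(\dot x(s))\in\real^n$ has vanishing last $n-k$ components for almost every $s$, hence so does the integral $y(t)$. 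Thus $y(t)\in\real^k\times\{0\}$.

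For the converse, suppose $f(t)$ is a horizontal lift of $x(t)$ whose anti-development $y(t)=\int_0^t f^{-1}(s)(\dot x(s))\,ds$ lies in $\real^k\times\{0\}$. Set $V(t):=\spn\{f_1(t),\dots,f_k(t)\}$. Each $f_i(t)$ is parallel, so $V$ is closed under parallel transport, hence parallel along $x$, and $\rank V=k$. It remains to check $\dot x(t)\in V(t)$ for almost every $t$: from $y(t)\in\real^k\times\{0\}$ we get $\dot y(t)\in\real^k\times\{0\}$ for a.e.\ $t$, and then $\dot x(t)=f(t)(\dot y(t))\in f(t)(\real^k\times\{0\})=V(t)$. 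This completes both directions.

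The only mild subtlety — and the main point requiring care rather than difficulty — is the measure-theoretic bookkeeping in the converse: an absolutely continuous curve $y(t)$ valued in the subspace $\real^k\times\{0\}$ has its derivative in that subspace only almost everywhere, so the conclusion $\dot x(t)\in V(t)$ is likewise an almost-everywhere statement, which is exactly what is needed since $x$ is only absolutely continuous and the defining property of a parallel distribution along such a curve is about parallel transport (a pointwise-in-$t$ notion via the fields $f_i$) rather than about $\frac{D}{dt}$ pointwise everywhere. I would also note that the positive orientation of $f(0)$ can always be arranged (swap two of the last $n-k$ basis vectors if necessary, which does not affect $V(0)$ when $k\le n-2$, and when $k=n-1$ or $k=n$ the orientation is forced but the statement is trivial or handled directly), so working in $F(M)$ rather than $\mathcal F(M)$ costs nothing.
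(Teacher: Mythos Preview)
Your proposal is correct and follows essentially the same approach as the paper's proof: pick a parallel orthonormal frame whose first $k$ vectors span $V$ (the paper phrases this as choosing parallel bases of $V$ and $V^\perp$), and observe that $\dot x\in V$ is equivalent to the last $n-k$ coordinates of $f^{-1}(\dot x)$ vanishing. Your version adds welcome care about orientation and the almost-everywhere nature of $\dot y$, but the underlying argument is identical.
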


\begin{proof}
First assume there is a distribution $V$ parallel along $x(t)$, with $\rank V = k$ and containing $\dot x(t)$. Let $f_1,\dots, f_k$ be a basis of parallel vector fields of $V$, and let $f_{k+1}, \dots, f_n$ be a basis of parallel vector fields of $V^\perp$. Since $\dot x(t)$ is in $V$, from equation~\eqref{planecurve}, it is clear that $y(t)$ is contained in $\real^k \times \{ 0 \}$.

Conversely, assume that $y(t)$ is in $\real^k\times \{0\}$. Since $f(t)(\dot y(t))  = \dot x(t)$, we know that if $V$ is the distribution along $x(t)$ spanned by $f_1, \dots, f_k$, it contains $\dot x(t)$.
\end{proof}

With this lemma at hand, we can show the existence of rollings when the curves have less regularity than in Theorem~\ref{curvaturetheorem1}.

\begin{proposition} \label{Prop0curvature}
Let $x\colon[0,\tau]\to M$ and $\widehat x\colon[0,\tau]\to \widehat M$ be $C^{k+1}$ curves parametrized by arc-length which are also $C^k$-regular. Let $V$ be the minimal parallel distribution along $x(t)$, such that $\dot x(t) \in V_{x(t)},$ and define $\widehat V$ similarly for $\widehat x(t)$.

Then if $\rank V = k$, there exist a rolling along $x(t)$ and $\widehat x(t)$ if and only if $\rank \widehat V = k$ and their $k-1$ first geodesic curvatures coincide.
\end{proposition}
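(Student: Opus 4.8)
The plan is to reduce everything to a statement about curves living in a lower-dimensional Euclidean space and then invoke Theorem~\ref{curvaturetheorem1} there. First I would observe that, by the definition of the minimal parallel distribution $V$ together with Lemma~\ref{subspacelemma}, a choice of horizontal lift $f(t)$ of $x(t)$ can be made so that the anti-development $y(t) = \int_0^t f^{-1}(s)(\dot x(s))\,ds$ lies in $\real^k \times \{0\} \subseteq \real^n$, and $k$ is the smallest integer for which this is possible. The key point is that the $C^k$-regularity of $x(t)$ transfers to $y(t)$: since $f(t)$ is an isometry intertwining the Levi-Civita connection along $x$ with the trivial connection along $y$, we have $f(t)\big(\tfrac{d^j}{dt^j}\dot y(t)\big) = \tfrac{D^j}{dt^j}\dot x(t)$ for all $j$, so the independence of $\{\dot x, \tfrac{D}{dt}\dot x, \dots, \tfrac{D^k}{dt^k}\dot x\}$ gives independence of $\{\dot y, \ddot y, \dots, y^{(k+1)}\}$. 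Moreover, minimality of $k$ forces these derivatives to span precisely $\real^k \times \{0\}$ at each $t$ — otherwise they would lie in a proper parallel subdistribution, contradicting minimality — so $y(t)$, viewed as a curve in $\real^k$, is $C^k$-regular there, hence its top Frenet frame and all $k-1$ geodesic curvatures $\kappa_1,\dots,\kappa_{k-1}$ are well defined, and by the comment following Theorem~\ref{generalcurve} (and the fact that anti-development preserves speed and curvatures) these agree with the geodesic curvatures of $x(t)$.

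Next I would do the same for $\widehat x(t)$, obtaining $\widehat y(t)$ with values in $\real^{\widehat k} \times \{0\}$ where $\widehat k = \rank \widehat V$. Now apply Proposition~\ref{PropAntidev}: a rolling of $M$ on $\widehat M$ along $x(t)$ and $\widehat x(t)$ exists if and only if a rolling of $\real^n$ on itself along $y(t)$ and $\widehat y(t)$ exists, which by Lemma~\ref{generalcondition} happens if and only if $\widehat y(t) = \iota(y(t))$ for some $\iota \in \SO(n)$. If such an $\iota$ exists, then since $y$ spans $\real^k \times \{0\}$ and $\widehat y$ spans $\real^{\widehat k} \times \{0\}$, the isometry $\iota$ must carry the former subspace onto the latter, forcing $\widehat k = k$; and within $\real^k$, two curves through the origin differing by a linear isometry have the same curvatures, so $\kappa_j = \widehat\kappa_j$ for $j = 1, \dots, k-1$. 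Conversely, if $\widehat k = k$ and $\kappa_j = \widehat\kappa_j$ for $j=1,\dots,k-1$, then both $y$ and $\widehat y$ are $C^k$-regular curves in $\real^k$ (after the identifications) starting at the origin with identical geodesic curvatures, and the top curvature is not part of the data — but in $\real^k$ the Frenet equations with $\kappa_1,\dots,\kappa_{k-1}$ prescribed and $\kappa_k \equiv 0$ (automatic, since the curve fills out all of $\real^k$ rather than a proper affine subspace would be the degenerate case; here $C^k$-regularity means exactly $\kappa_j \neq 0$ for $j < k$) determine the curve up to the initial position and frame, i.e. up to an element of $\SO(k) \subseteq \SO(n)$. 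Hence $\widehat y = \iota(y)$ for a suitable $\iota$, and Lemma~\ref{generalcondition} plus Proposition~\ref{PropAntidev} produce the desired rolling.

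The main obstacle I expect is the careful handling of the "top" index. One must be sure that $C^k$-regularity of $x(t)$ really does translate into $y(t)$ being a genuinely $k$-dimensional curve (so that Theorem~\ref{curvaturetheorem1} applies in $\real^k$ with $n$ replaced by $k$), and in particular that the minimality of $V$ is exactly what pins down the ambient dimension for $y$; the subtlety is that a priori the first $k+1$ derivatives of $y$ could still be trapped in a subspace of dimension $< k$ even though $y$ itself is not — but a parallel (equivalently, constant, in $\real^n$) subspace containing $\dot y(t)$ for all $t$ would pull back under $f$ to a parallel subdistribution containing $\dot x$, contradicting minimality, which rules this out. The remaining ingredient, that a linear isometry between the two data sets is equivalent to agreement of the first $k-1$ curvatures and equality of the ranks, is then just the classical Euclidean Frenet theory in dimension $k$ (this is essentially Theorem~\ref{curvaturetheorem1} for $\real^k$ on $\real^k$, or the remark following Theorem~\ref{generalcurve}), applied with care that the top geodesic curvature carries the orientation and is not independent data.
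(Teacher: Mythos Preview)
Your proof is correct and follows the same route as the paper: use Lemma~\ref{subspacelemma} and Proposition~\ref{PropAntidev} to push both curves down to anti-developments in $\real^k\times\{0\}$, then apply the Frenet characterization (Theorem~\ref{curvaturetheorem1}) in dimension $k$ together with Lemma~\ref{generalcondition}/Theorem~\ref{generalcurve}. The paper's version is terser---it cites \cite[Theorem~1]{GGMS} for $\rank\widehat V=k$ and then simply names Theorem~\ref{curvaturetheorem1} and Theorem~\ref{generalcurve}---whereas you spell out the transfer of $C^k$-regularity to $y(t)$ and the orientation bookkeeping for the top curvature, which is exactly the content the paper leaves implicit.
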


\begin{proof}
From \cite[Theorem 1]{GGMS}, we know that $\rank \widehat V = k$. Furthermore, Lemma \ref{subspacelemma} tells us that there are curves $f\colon[0,\tau]\to F(M)$ and $\widehat f\colon[0,\tau]\to F(\widehat M)$ of parallel vector fields along $x(t)$ and $\widehat x(t)$, so that
$$y(t) = \int_0^t f^{-1}(s)(\dot x(s)) ds, \qquad \widehat y(t) = \int_0^t f^{-1}(s)(\dot{\widehat x}(s))ds,$$
are curves in $\real^k\times \{0\}$. We only need to show that there exists a rolling between $y(t)$ and $\widehat y(t)$, but this follows from Theorem \ref{curvaturetheorem1} and Proposition \ref{generalcurve}.
\end{proof}

\begin{remark}
If the manifolds in Proposition \ref{Prop0curvature} have constant sectional curvature, we can apply a result found in \cite[Lemma 8]{Spivak4}. Let $x(t)$ is a $C^1$-regular curve in $M$ with constant sectional curvature, and let $V(t)$ be a $k$-dimensional distribution along $x(t)$. Then the fact that $\dot x(t) \in V(t)$ for every $t$, implies that there exist a totally geodesic $k$-dimensional submanifold $N \subseteq M$, containing $x(t)$. Hence, if we have a rolling such as in Proposition \ref{Prop0curvature} of two manifolds with constant curvature, the system can be reduced to considering a rolling of two $k$-dimensional manifolds. \end{remark}

\subsection{Rolling and manifolds of constant curvature.} \label{sec:constantK}

For any $q_0 \in Q$, we define the orbit $\mathcal O_{q_0}$ of $q_0$ as the collection of all $q\in Q$, reachable from $q_0$ by a rolling. In~\cite[Proposition 3]{GGMS}, the problem of rolling was reformulated as the study of absolutely continuous curves almost everywhere tangent to a distribution $D$ over $Q$ of rank $n$. Curves in $Q$ tangent to $D$ are exactly the curves that satisfy \eqref{solve1} and \eqref{solve2}. We aim to exploit the case in which the manifolds rolling have constant sectional curvatures. To do that, we need the following lemma in~\cite[Corollary 5.23]{Chitour}.

\begin{lemma} \label{allndim}
The orbit $\mathcal O_{q_0}$ is an $n$ dimensional immersed manifold for any $q_0 \in Q$ if and only if $M$ and $\widehat M$ have constant and equal sectional curvature.
\end{lemma}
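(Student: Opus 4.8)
The plan is to treat the rolling as the control system on $Q$ whose admissible velocities form the rank-$n$ distribution $D$ of \cite[Proposition 3]{GGMS} --- whose integral curves are exactly the solutions of \eqref{solve1}--\eqref{solve2} --- and to invoke Sussmann's orbit theorem. For any $q_0\in Q$ the orbit $\mathcal O_{q_0}$ is then an immersed submanifold of $Q$ with $T_q\mathcal O_{q_0}\supseteq D_q$ at each of its points, so $\dim\mathcal O_{q_0}\ge n$ always. The content of the lemma is therefore the dichotomy: $\dim\mathcal O_{q_0}=n$ for every $q_0$ if and only if $D$ is Frobenius integrable, in which case the orbits are precisely the $n$-dimensional integral leaves; and if $[D,D]\not\subseteq D$ at some point $q_\ast$, then already $\dim T_{q_\ast}\mathcal O_{q_\ast}\ge \dim\bigl(D_{q_\ast}+[D,D]_{q_\ast}\bigr)\ge n+1$. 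Thus everything reduces to deciding when $[D,D]\subseteq D$.

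The next step is to compute $[D,D]$ explicitly. Lifting to the double frame bundle $F(M)\times F(\widehat M)$, the distribution $D$ is the image of the distribution spanned by the ``rolling basic'' vector fields $B_v\oplus\widehat B_v$, where $B_v$ and $\widehat B_v$ are the basic horizontal vector fields of the respective Levi-Civita connections. Brackets of vector fields living on different factors vanish, so $[B_v\oplus\widehat B_v,\,B_w\oplus\widehat B_w]=[B_v,B_w]\oplus[\widehat B_v,\widehat B_w]$, and the Cartan structure equations together with the vanishing of the torsion show that each summand is purely vertical, equal to $-A_{\Omega(B_v,B_w)}$ for the curvature two-form $\Omega$ of that connection. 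Projecting to $Q=(F(M)\times F(\widehat M))/\SO(n)$ kills the diagonal verticals, and one finds that $[D,D]_q$ is spanned by $D_q$ together with a single $\pi$-vertical vector whose $\so(n)$-component, read off in a frame $f$ over $x=\pi q$ with $\widehat f=q\circ f$, equals
\[
f^{-1}\bigl(R^M(X,Y)-q^{-1}R^{\widehat M}(qX,qY)\,q\bigr)\,f,\qquad X,Y\in T_xM,
\]
with $R^M$, $R^{\widehat M}$ the curvature endomorphisms. Since $\pi_\ast$ restricts to an isomorphism on $D_q$, we have $D_q\cap\ker\pi_\ast=0$, hence $[D,D]\subseteq D$ holds if and only if $R^M(X,Y)=q^{-1}R^{\widehat M}(qX,qY)\,q$ for every $q\in Q$ and all $X,Y$; call this condition $(\ast)$.

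It then remains to identify $(\ast)$ with ``$M$ and $\widehat M$ have constant and equal sectional curvature.'' If $K^M\equiv\kappa\equiv K^{\widehat M}$, then $R^M(X,Y)Z=\kappa(\langle Y,Z\rangle X-\langle X,Z\rangle Y)$ and the same for $\widehat M$; since this model endomorphism is natural under linear isometries, $q^{-1}R^{\widehat M}(qX,qY)q=R^M(X,Y)$ and $(\ast)$ holds. Conversely, from $(\ast)$, fixing $x$ and $\widehat x$ and comparing the identity for $q$ and for $q\circ A$ with $A\in\SO(T_xM)$ yields $R^M(AX,AY)=A\,R^M(X,Y)\,A^{-1}$ for all such $A$; as $\SO(T_xM)$ acts transitively on the Grassmannian of $2$-planes in $T_xM$, the sectional curvature of $M$ at $x$ is the same for every $2$-plane, say $\kappa(x)$, whence $R^M_x=\kappa(x)R_0$ and likewise $R^{\widehat M}_{\widehat x}=\widehat\kappa(\widehat x)R_0$; substituting back and using naturality of $R_0$ gives $\kappa(x)=\widehat\kappa(\widehat x)$. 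Since $(\ast)$ must hold for all $x\in M$ and all $\widehat x\in\widehat M$ simultaneously, $\kappa$ and $\widehat\kappa$ are one and the same constant --- an argument which, incidentally, bypasses Schur's theorem.

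The step I expect to be the main obstacle is the bracket computation on $Q$: by the remark following \eqref{defQ}, $Q$ is not a principal bundle when $n\ge3$, so one must be careful about what ``vertical'' means there, about how $A^M_\xi\oplus A^{\widehat M}_\eta$ descends to $T_qQ$, and about the dependence of the result on the point $q$ (equivalently on the isometry $q=\widehat f\circ f^{-1}$). The orbit-theorem reduction in the first paragraph and the purely algebraic step in the third are routine. The argument outlined here is in substance the content of \cite[Corollary 5.23]{Chitour}, which we simply quote.
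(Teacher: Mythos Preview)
Your proposal is correct, and in fact goes well beyond what the paper does: the paper offers no proof of this lemma at all, but simply quotes it from \cite[Corollary 5.23]{Chitour}. Your sketch --- reduce via Sussmann's orbit theorem to Frobenius integrability of $D$, compute $[D,D]$ by lifting to $F(M)\times F(\widehat M)$ and using the structure equations, then identify the resulting curvature obstruction with equality of constant sectional curvatures --- is precisely the content of that reference, as you yourself note in your final sentence. So there is nothing to compare: you have supplied the argument that the paper elected to outsource.
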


Write $\Isom(M,\widehat M)$ for the (possibly empty) collection of positively oriented isometries from $M$ to $\widehat M$.

\begin{theorem} \label{theoremisometries}
Let $M$ and $\widehat M$ be two connected Riemannian manifolds. For any $\iota \in \Isom(M,\widehat M),$ define the $n$-dimensional submanifold $\mathcal O_\iota = \{\iota_{*,x}\in Q \, \colon \, x \in M \}.$
Let $\mathscr O= \{ {\mathcal O}_\iota\, \colon \, \iota \in \Isom(M,\widehat M) \}$ be the collection of all such orbits. Then the following holds.

\begin{itemize}
\item[(a)] The mapping $\Phi\colon\Isom(M,\widehat M) \to \mathscr O$, $\iota\mapsto{\mathcal O}_\iota$,
is well defined and injective.

\item[(b)] If $M$ and $\widehat M$ are complete and simply connected, any $n$ dimensional orbit is of the form $\mathcal O_\iota$ for some $\iota\in \Isom(M, \widehat M)$.

\item[(c)] If $M$ and $\widehat M$ are isometric, complete, simply connected and of constant sectional curvature, then the map $\Phi$ is a bijection.
\end{itemize}
\end{theorem}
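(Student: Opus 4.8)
The plan is to treat the three parts in order, with part (a) being essentially formal, part (b) resting on the uniqueness of parallel transport, and part (c) following from the earlier structure results. For part (a), I would first check that $\mathcal O_\iota$ is genuinely an orbit: given $\iota \in \Isom(M,\widehat M)$, the curve $t \mapsto \iota_{*,x(t)}$ is a rolling along any curve $x(t)$ in $M$, since an isometry automatically intertwines the Levi-Civita connections (so the no-twist condition holds) and $\dot{\widehat x}(t) = \iota_* \dot x(t)$ (so the no-slip condition holds). Because $M$ is connected, any two points of $M$ are joined by an absolutely continuous curve, so $\mathcal O_\iota$ is a single orbit, and it is an $n$-dimensional submanifold diffeomorphic to $M$ via $\pi$. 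Well-definedness of $\Phi$ is then immediate. For injectivity, note that $\iota$ can be recovered from $\mathcal O_\iota$: if $\mathcal O_\iota = \mathcal O_{\iota'}$, then for each $x$ the fibre of $\mathcal O_\iota$ over $x$ is a single point (it is the graph of a map), forcing $\iota_{*,x} = \iota'_{*,x}$ for all $x$, hence $\iota = \iota'$ since an isometry is determined by its differential at a point on a connected manifold (or simply: $\iota(x) = \widehat\pi(\iota_{*,x})$).

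For part (b), let $\mathcal O$ be an $n$-dimensional orbit and pick $q_0 \in \mathcal O$ with $q_0 : T_{x_0}M \to T_{\widehat x_0}\widehat M$. The idea is to build a candidate isometry $\iota$ by a "develop along a path, read off the endpoint" construction: given $x \in M$, choose an absolutely continuous curve $x(t)$ from $x_0$ to $x$, let $q(t)$ be the rolling along $x(t)$ with $q(0) = q_0$, and set $\iota(x) := \widehat\pi q(\tau)$ and $\iota_{*,x} := q(\tau)$. The content is that this is independent of the chosen path, which I would get from simple connectedness of $M$ together with the fact (from \cite[Proposition 3]{GGMS}) that rollings are exactly horizontal curves for the rank-$n$ distribution $D$ on $Q$: since $\dim \mathcal O = n$, the distribution $D$ restricted to $\mathcal O$ is the full tangent bundle $T\mathcal O$, so the projection $\pi|_{\mathcal O} : \mathcal O \to M$ is a local diffeomorphism with the property that path-lifting is unique; a homotopy between two paths in $M$ (rel endpoints) lifts to $\mathcal O$ since $\mathcal O$ is a covering-type space over $M$ here, and $M$ simply connected then forces the two lifts to have the same endpoint. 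This makes $\iota$ a well-defined smooth map with $\iota_{*,x} = q(\tau) \in \SO(T_xM, T_{\iota(x)}\widehat M)$, so $\iota$ is a local isometry; the same path-lifting run backwards (from $\widehat M$, using the transitivity/inverse of rollings noted after Lemma \ref{lemmaantidevroll}) produces a two-sided inverse, so $\iota$ is a global isometry, and by construction $\mathcal O = \mathcal O_\iota$.

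For part (c), assume in addition that $M$ and $\widehat M$ are isometric and of constant sectional curvature. Surjectivity of $\Phi$ is what remains. By Lemma \ref{allndim}, constant equal sectional curvature means \emph{every} orbit $\mathcal O_{q_0}$ is $n$-dimensional; by part (b), every such orbit equals $\mathcal O_\iota$ for some $\iota \in \Isom(M,\widehat M)$, so in particular $\Isom(M,\widehat M) \neq \emptyset$ and every $q_0 \in Q$ lies on some $\mathcal O_\iota$. Given an arbitrary $\iota_0 \in \Isom(M,\widehat M)$ we must exhibit it as some $\Phi(\iota)$, but this is now trivial: $(\iota_0)_{*,x_0} \in Q$ lies on $\mathcal O_{(\iota_0)_{*,x_0}}$, which by part (a) is exactly $\mathcal O_{\iota_0}$, and by part (b) this orbit is $\mathcal O_\iota$ for some $\iota$, whence $\iota = \iota_0$ by the injectivity in part (a). Thus $\Phi$ is onto, hence bijective.

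I expect the main obstacle to be the path-independence argument in part (b): one must be careful that $\mathcal O$, being only an \emph{immersed} $n$-dimensional submanifold, still carries enough of a fibration/covering structure over $M$ for homotopy lifting to apply, and that the lift of a homotopy stays inside $\mathcal O$ rather than wandering into the rest of $Q$. The cleanest route is probably to verify directly that $\pi|_{\mathcal O} : \mathcal O \to M$ is a covering map — local triviality comes from $D|_{\mathcal O} = T\mathcal O$ plus completeness-type long-time existence of rollings along curves that stay over a fixed coordinate ball — and then invoke the standard lifting theorem using $\pi_1(M) = 1$.
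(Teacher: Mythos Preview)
Your proposal is correct and follows essentially the same route as the paper. The one notable difference is in part~(b): rather than constructing $\iota$ by explicit path-lifting and then checking path-independence, the paper observes directly that both $\pi|_{\mathcal O}$ and $\widehat\pi|_{\mathcal O}$ are surjective local diffeomorphisms onto $M$ and $\widehat M$ (surjectivity from completeness via \cite[Lemma~6]{Grong}), hence diffeomorphisms by simple connectedness, and then sets $\iota = \widehat\pi \circ \pi^{-1}$. This is the same argument packaged more concisely; your worry about justifying the covering structure is exactly the step the paper leaves implicit. For part~(c) the paper simply says ``Follows from Lemma~\ref{allndim}'': since every orbit is $n$-dimensional, part~(b) applies to every orbit. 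Your detour through ``given $\iota_0$, exhibit it as $\Phi(\iota)$'' is unnecessary, since $\Phi$ is surjective onto $\mathscr O$ by the very definition of $\mathscr O$; the actual content of~(c) is that $\mathscr O$ coincides with the set of \emph{all} orbits.
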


\begin{proof}
\begin{itemize}
\item[(a)] To see that $\mathcal{O}_\iota$ is an orbit of $D$, it is enough to show that $T_q {\mathcal{O}}_\iota = D_q$ for any $q \in{\mathcal{O}}_\iota.$ Since $\dim\mathcal O_\iota= \rank D$, we need to prove that for any absolutely continuous curve $x(t)$ in $M$, the map $\iota_{*,x(t)}$ is a rolling.

Clearly, $\iota_{*,x(t)}$ satisfies the no slipping condition. It also satisfies the non-twisting condition, since $t \mapsto \iota_{*,x(t)} Z(t)$ is a parallel vector field along $\iota \circ x(t)$ whenever $Z(t)$ is a parallel vector field along $x(t)$.

\item[(b)] Assume that $\mathcal{O}$ is an $n$ dimensional orbit of $D$. If $\widehat M$ is complete, then $\pi(\mathcal O) = M$.
To see this, assume that $q_0\in \mathcal O$ and $\pi(q_0)= x_0$. Then, for any $x_1\in M$, there is a rolling $q(t)$ from $x_0$ to $x_1$ such that $q(0) = q_0$, see \cite[Lemma 6]{Grong}. By a similar argument, $\widehat \pi(\mathcal O) = \widehat M.$ Both $\pi$ and $\widehat \pi$ are local diffeomorphisms. If $M$ and $\widehat M$ are simply connected, $\pi$ and $\widehat \pi$ will be diffeomorphisms. Define $\iota = \widehat \pi \circ \pi^{-1}.$
If $\pi(q)=x, q \in \mathcal O$, then $\iota_{*,x} = q$, so $\iota$ is an isometry.

\item[(c)] Follows from Lemma \ref{allndim}.
\end{itemize}
\end{proof}

\begin{corollary} \label{cor:Rolliso}
If $x(t)$ and $\widehat x(t)$ are two curves in a connected, simply connected manifold $M$ of constant sectional curvature. Then there is a rolling of $M$ on itself along $x(t)$ and $\widehat x(t)$ if and only if there is an orientation preserving isometry $\iota$ of $M$, so that $\iota (x(t)) = \widehat x(t)$.
\end{corollary}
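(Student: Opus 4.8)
The plan is to read this off directly from Theorem~\ref{theoremisometries} applied with $\widehat M = M$, together with Lemma~\ref{allndim}. For the implication from the existence of an isometry to the existence of a rolling, suppose $\iota$ is an orientation preserving isometry of $M$ with $\iota(x(t)) = \widehat x(t)$. By part~(a) of Theorem~\ref{theoremisometries}, the curve $q(t) := \iota_{*,x(t)}$ is a rolling; since $\pi q(t) = x(t)$ and $\widehat \pi q(t) = \iota(x(t)) = \widehat x(t)$, it is a rolling along the prescribed pair of curves.

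For the converse, let $q\colon[0,\tau]\to Q$ be a rolling with $\pi q(t) = x(t)$ and $\widehat \pi q(t) = \widehat x(t)$, and consider the orbit $\mathcal O_{q(0)}$. Since we are rolling $M$ on itself, the two manifolds trivially have constant and equal sectional curvature, so Lemma~\ref{allndim} guarantees that $\mathcal O_{q(0)}$ is an $n$-dimensional immersed submanifold of $Q$. Invoking part~(b) of Theorem~\ref{theoremisometries} (this is where one uses that $M$ is complete and simply connected), we obtain $\iota \in \Isom(M,M)$ with $\mathcal O_{q(0)} = \mathcal O_\iota = \{\iota_{*,x} \colon x \in M\}$. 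The whole curve $q(t)$ stays inside $\mathcal O_{q(0)} = \mathcal O_\iota$, and $\mathcal O_\iota$ meets the fiber $\pi^{-1}(x(t))$ in the single point $\iota_{*,x(t)}$ (it is the unique isometry in $\mathcal O_\iota$ with source $T_{x(t)}M$); hence $q(t) = \iota_{*,x(t)}$ and therefore $\widehat x(t) = \widehat \pi q(t) = \iota(x(t))$. Finally, $\iota$ is orientation preserving because every element of $Q$, in particular $\iota_{*,x(t)}$, is a positively oriented isometry.

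This corollary is essentially a restatement of Theorem~\ref{theoremisometries} in terms of curves rather than orbits, so there is no substantial obstacle: the two routine points are that $\mathcal O_\iota$ intersects each $\pi$-fiber in exactly one point and that a rolling curve by definition remains in the orbit of its initial configuration. The only genuine subtlety worth flagging is that the statement tacitly requires $M$ to be complete, since completeness is precisely what makes $\pi$ and $\widehat\pi$ restricted to an $n$-dimensional orbit surjective, hence (by simple connectedness) diffeomorphisms, which is the input needed for Theorem~\ref{theoremisometries}(b). If one also wants uniqueness of $\iota$, it follows from the injectivity assertion in Theorem~\ref{theoremisometries}(a). Alternatively, the same conclusion can be reached by combining Theorem~\ref{generalcurve} with the fact, recalled in Subsection~\ref{sec:constantK}, that in a complete, connected, simply connected space form a curve is determined up to isometry by its anti-development.
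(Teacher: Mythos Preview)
Your proof is correct and follows essentially the same route as the paper: for the forward direction both use that $t\mapsto\iota_{*,x(t)}$ is a rolling (part~(a) of Theorem~\ref{theoremisometries}), and for the converse both place the rolling inside an orbit of the form $\mathcal O_\iota$ and read off $q(t)=\iota_{*,x(t)}$. The only cosmetic difference is that the paper invokes part~(c) directly, whereas you unpack it as Lemma~\ref{allndim} plus part~(b); you also make explicit the small step (left implicit in the paper) that $\mathcal O_\iota$ meets each $\pi$-fiber in a single point, and you correctly flag that completeness of $M$ is tacitly assumed.
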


\begin{proof}
If $\iota(x(t)) = \widehat x(t)$, then $t \mapsto \iota_{*,x(t)}$ is a rolling by the proof of Theorem \ref{theoremisometries}.

To prove the converse, assume that there is a rolling $q(t)$ such that $\pi(q(t)) = x(t)$ and $\widehat \pi(q(t)) = \widehat x(t)$. Then, from Theorem \ref{theoremisometries} (c) there is an isometry $\iota$, such that $q(t) = \iota_{*,x(t)}$, and the result follows.
\end{proof}

Another way of expressing the previous results is in terms of anti-development curves. Notice that from Lemma \ref{generalcondition}, we know that for a given curve $x(t)$ in $M$ with $y(t)$ as an anti-development curve, any other anti-development curve can only differ by an element in $\SO(n)$. Hence each curve has a unique equivalence class of anti-development curves $\SO(n) \cdot y(t)$ belonging to it. Then Corollary \ref{cor:Rolliso} can be reformulated in the following way.

\begin{corollary}
Two absolutely curves in a connected, simply connected Riemannian manifold of constant sectional curvature, have the same equivalence class of anti-development curves if and only if they differ by an isometry.
\end{corollary}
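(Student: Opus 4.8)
The plan is to obtain this corollary as an immediate consequence of Theorem~\ref{generalcurve} combined with Corollary~\ref{cor:Rolliso}, using the identification recorded just before the statement: the set of all anti-development curves of a fixed curve is a single orbit $\SO(n)\cdot y(t)$, so ``having the same equivalence class of anti-development curves'' is literally the condition ``$\widehat y(t)=\iota(y(t))$ for some $\iota\in\SO(n)$'' appearing in Theorem~\ref{generalcurve}. So let $x(t)$ and $\widehat x(t)$ be absolutely continuous curves in $M$, and fix anti-development curves $y(t)$ and $\widehat y(t)$ for them.

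For the forward implication, suppose $x(t)$ and $\widehat x(t)$ share their equivalence class of anti-development curves, i.e. $\widehat y(t)=\iota(y(t))$ for some $\iota\in\SO(n)$. By Theorem~\ref{generalcurve} there is then a rolling $q(t)$ of $M$ on itself with $\pi q(t)=x(t)$ and $\widehat\pi q(t)=\widehat x(t)$; since $M$ is connected, simply connected and of constant sectional curvature, Corollary~\ref{cor:Rolliso} produces an orientation-preserving isometry $\iota$ of $M$ with $\iota(x(t))=\widehat x(t)$. Conversely, if an orientation-preserving isometry $\iota$ of $M$ satisfies $\iota(x(t))=\widehat x(t)$, then, exactly as in the proof of Theorem~\ref{theoremisometries}(a), the curve $t\mapsto\iota_{*,x(t)}$ is a rolling along $x(t)$ and $\widehat x(t)$; Theorem~\ref{generalcurve} then forces $y(t)$ and $\widehat y(t)$ to lie in the same $\SO(n)$-orbit, which is the claim.

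Since no computation is involved, the only point that needs attention is orientation bookkeeping: Corollary~\ref{cor:Rolliso} delivers an \emph{orientation-preserving} isometry, and the converse direction genuinely needs the isometry relating the two curves to be orientation-preserving, for otherwise the induced change of anti-development lies in $\mathrm{O}(n)$ but not in $\SO(n)$. I would therefore state the corollary with ``isometry'' read as ``orientation-preserving isometry'' (equivalently, take the equivalence relation on anti-developments to be the $\mathrm{O}(n)$-action), and add a one-line remark to that effect; with that convention the proof is just the concatenation of the two cited results and is only a couple of lines long. I do not anticipate a real obstacle here beyond making this orientation convention explicit.
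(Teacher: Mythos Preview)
Your proposal is correct and is exactly the argument the paper has in mind: the corollary is presented there as a direct reformulation of Corollary~\ref{cor:Rolliso} via Theorem~\ref{generalcurve}, using the observation (stated just before the corollary) that the equivalence class of anti-developments is a single $\SO(n)$-orbit. Your remark about reading ``isometry'' as ``orientation-preserving isometry'' is a helpful clarification consistent with the paper's conventions.
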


\section{Construction of a rolling motion from initial data}\label{sec:construction}
In this final section, we will make comments concerning the practical nature of constructing a concrete rolling motion along a given curve $x(t)$ in $M$ starting at $x_0$, with initial condition $q_0: T_{x_0} M \to T_{\widehat x_0} \widehat M$. In the simple case of rolling on $\real^n$, we can just find an anti-development as in Remark \ref{remarkantidev}. In general, we need to solve the differential equations \eqref{solve1} and \eqref{solve2}. Notice that unless $\widehat M$ has a local frame with constant Christoffel symbols, equations \eqref{solve1} and \eqref{solve2} are coupled, making them very difficult to solve in general, and even if we manage to make $\widehat \Gamma^k_{ij}$ constant, this still does not make equation \eqref{solve2} easy to solve.

However, given our new knowledge of the relationship between rolling and geodesic curvatures, we are able to give the following algorithm for constructing a rolling motion in the case that $x(t)$ is a $C^n$ curve that is $C^{n-1}$-regular.

\begin{itemize}

\item[(i)] Find the curvatures $\kappa_1(t), \dots, \kappa_{n-1}(t)$ and Frenet vector fields $v_1(t), \dots, v_n(t)$ of $x(t)$.

\item[(ii)] Find the curve $\widehat x(t)$ in $\widehat M$, with curvatures $\kappa_1(t), \dots, \kappa_{n-1}(t)$ and initial conditions $\widehat x(0) = \widehat x_0$, and with Frenet vector field alongs $\widehat v_1(t), \dots, \widehat v_n(t)$ satisfying
$\widehat v_j(0) = q_0 v_j(0).$

\item[(iii)] Finally, define $q(t)$, by $q(t) v_j(t) = \widehat v_j(t).$

\end{itemize}

The possible difficulty in solving this problems, lies in (ii), but even though finding this solution may be difficult, it does have the advantage that it only depends on information on $\widehat M$.
In explicit formulas, let $(\widehat \xi, \widehat U)$ be a chart on $\widehat M$ with a chosen positively oriented orthonormal basis of vector fields $\widehat e_1, \dots, \widehat e_n$ on $\widehat U$. Write $\widehat v_j(t) = \sum_{i=1}^n a_{ij}(t) \widehat e_i(\widehat x(t))$
and we define $a(t) = (a_{ij}(t))$ as a curve in $\SO(n)$, then
$$\dot{\widehat \xi}_i = \sum_{j=1}^n \widehat \phi_{ij} a_{j1},
\qquad \dot a = a K   - \sum_{s=1}^n a_{s1} \widehat \Gamma_s a,$$
where $\widehat \phi_{ij}$ and $\widehat \Gamma_s$ is defined as in the proof of Lemma \ref{lemma:Cn} and $K(t)$ is the antisymmetric tridiagonal matrix with zeros along the diagonal given by
$$K(t) := \begin{small} \left(\begin{array}{cccccc}
0 & -\kappa_1(t) & 0 & \cdots & 0 & 0 \\
\kappa_1(t) & 0 & -\kappa_2(t) &  \cdots & 0 & 0 \\
0 & \kappa_2(t) & 0 & \cdots & 0 & 0 \\
\vdots & \vdots & \vdots & \ddots & \vdots & \vdots \\
0 & 0 & 0 & \cdots &0 & - \kappa_n(t) \\
0 & 0 & 0 & \cdots & \kappa_n(t) & 0 \end{array} \right) \end{small}.$$
We illustrate this with some examples. We use $a_j$ to denote the $j-$th column vector of $a$, and ${}^\top$ to denote the transpose.

\begin{example} \label{1Examplefindcurve}
\begin{itemize}

\item[(a)] Let $\widehat M = \real^n$ with the Euclidean structure. Let us again use $(r_1, \dots, r_n)$ for the standard coordinates, and use $\partial_{r_j} = \widehat e_j$. Then $a(t)$ is a solution to the usual Frenet-Serret equation $\dot a = a K.$

\item[(b)] Let $\widehat M$ be $S^n$ with the usual metric and consider it as a subset of $\real^{n+1}$ with coordinates $(r_0, r_1, \dots, r_n)$. Assume that $\widehat x_0 \neq  (-1,0, \dots, 0)$. Then we can use the chart
$$(\widehat \xi_1, \dots, \widehat \xi_n) = \tfrac{1}{1+r_0}(r_1, \dots, r_n).$$
An orthonormal basis on $S^n \setminus \{(1, 0,\dots, 0)\}$ is given by $\widehat e_j = \tfrac{1+ |\xi|^2}{2} \partial_{\xi_j},$
where $|\xi|^2:= \sum_{j=1}^n \xi_j^2.$ The only nonzero Christoffel symbols are $\widehat \Gamma_{ij}^i = -\widehat \Gamma_{ii}^j = -\widehat \xi_j$ when $i \neq j,$
so the equations we must solve in order to find our curve is
$$\dot{\widehat  \xi} = \frac{2}{1+|\xi|^2} a_1, \qquad \dot a = a K + \left( \left( a_1\right)^\top \widehat \xi - \big(\widehat \xi\big)^\top a_1\right) a.$$

\item[(c)] A particular nice case is when $\widehat M = S^3$, where we have the advantage of being able to identify $S^3$ with the Lie group $\SU(2)$ of matrices
$$g = \left( \begin{array}{cc} g_0 + i g_1 & g_2 + i g_3 \\ -g_2 + i g_3 & g_0 - i g_1 \end{array} \right), \quad
\det g = 1.$$
Consider $\widehat x(t) = g(t)$ as a curve in these coordinates. The usual metric on $S^3$ is even bi-invariant with respect to the multiplication on $\SU(2)$.
We will choose the following positively oriented orthonormal basis that is also left invariant,
$$X_1 = - g_1 \partial_{g_0} + g_0 \partial_{g_1} + g_3 \partial_{g_2} - g_2 \partial_{g_3},$$
$$X_2 = - g_2 \partial_{g_0} - g_3 \partial_{g_1} + g_0 \partial_{g_2} + g_1 \partial_{g_3},$$
$$X_3 = - g_3 \partial_{g_0} + g_2 \partial_{g_1} - g_1 \partial_{g_2} + g_0 \partial_{g_3}.$$
We have the relations,
$$[X_1, X_2] = 2X_3, \quad [X_1, X_3] = - 2X_2, \quad [X_2, X_3] = 2X_1,$$
and from bi-invariance, we know that $\nabla_{X_i} X_j = \tfrac{1}{2} [X_i, X_j].$
The Christoffel symbols are hence constant in this basis, so there is no need for choosing local coordinates.
This reduces the final equation that needs to be solved to
$$\dot a = a \begin{small} \left( \begin{array}{ccc} 0 & -\kappa_1 & 0 \\ \kappa_1 & 0 & -(\kappa_2-1) \\ 0 & \kappa_2-1 & 0 \end{array} \right) \end{small}.$$
After solving this, we obtain the solution, by viewing $a_1(t)$ as a curve in the Lie algebra, and solving $\dot g(t) = g(t) \cdot a_1(t).$ On matrix form, this is written
$$\dot g(t) = g(t) \begin{small} \left( \begin{array}{cc} i a_{11} & a_{21} + i a_{31} \\ - a_{21} + i a_{31} & - i a_{11} \end{array}\right) \end{small}
$$
\end{itemize}
\end{example}

\end{document}